\newtheorem{theorem}{Теорема}%[section]
\newtheorem{proposition}[theorem]{Твердження}
\newtheorem{corollary}[theorem]{Наслiдок}
\newtheorem{lemma}[theorem]{Лема}
\theoremstyle{definition}
\newtheorem{example}[theorem]{Приклад}%[section]
\newtheorem{remark}[theorem]{Зауваження}%[section]
\newtheorem{definition}[theorem]{Означення}%[section]
\begin{document}
\setcounter{equation}{0}
\large
\twocolumn[УДК 512.534
\begin{center}
{\Large \bf \copyright 2018 р. \,\, Олег Гутік, Анатолій Савчук \/}
\bigskip

Львівський національний університет імені Івана Франка
\end{center}

\begin{center}
{\bf НАПІВГРУПА ЧАСТКОВИХ КОСКІНЧЕННИХ ІЗОМЕТРІЙ НАТУРАЛЬНИХ ЧИСЕЛ }
\end{center}

\begin{center}
\parbox{15cm}{\normalsize \parindent=0.5cm
Вивчається напівгрупа $\mathbf{I}\mathbb{N}_{\infty}$ усіх часткових коскінченних ізометрій множини натуральних чисел. Ми описуємо відношення Ґріна на напівгрупі $\mathbf{I}\mathbb{N}_{\infty}$, її в'язку та доводимо, що $\mathbf{I}\mathbb{N}_{\infty}$~--- проста $E$-унітарна $F$-інверсна напівгрупа. Описана найменша групова конгруенція $\mathfrak{C}_{\mathbf{mg}}$ на напівгрупі $\mathbf{I}\mathbb{N}_{\infty}$ та доведено, що фактор-напівгрупа $\mathbf{I}\mathbb{N}_{\infty}/\mathfrak{C}_{\mathbf{mg}}$ ізоморфна адитивній групі цілих чисел. Наведено приклад конгруенції на напівгрупі $\mathbf{I}\mathbb{N}_{\infty}$, яка не є груповою. Також доведено, що конгруенція на $\mathbf{I}\mathbb{N}_{\infty}$ є груповою тоді і лише тоді, коли її звуження на довільну піднапігрупу $S$ в $\mathbf{I}\mathbb{N}_{\infty}$, яка ізоморфна біциклічній напівгрупі, є груповою конгруенцією на $S$.

\bigskip

\textbf{Oleg Gutik, Anatolii Savchuk, The semigroup of partial co-finite isometries of positive integers}.

The semigroup $\mathbf{I}\mathbb{N}_{\infty}$ of all partial co-finite isometries of positive integers is studied. We describe Green's relations on the semigroup $\mathbf{I}\mathbb{N}_{\infty}$, its band and proved that $\mathbf{I}\mathbb{N}_{\infty}$ is a simple $E$-unitary $F$-inverse semigroup. We described the least group congruence $\mathfrak{C}_{\mathbf{mg}}$ on $\mathbf{I}\mathbb{N}_{\infty}$ and proved that the quotient-semigroup  $\mathbf{I}\mathbb{N}_{\infty}/\mathfrak{C}_{\mathbf{mg}}$ is isomorphic to the additive group of integers. An example of a non-group congruence on the semigroup $\mathbf{I}\mathbb{N}_{\infty}$ is presented. Also we proved that a congruence on the semigroup $\mathbf{I}\mathbb{N}_{\infty}$ is a group congruence if and only if its restriction onto an isomorphic  copy of the bicyclic semigroup in $\mathbf{I}\mathbb{N}_{\infty}$ is a group congruence.}
\end{center}

\bigskip
] \markboth{}{}

У даній праці ми користуватимемося термінологією з [7, 9, 11].
Надалі у тексті множину натуральних чисел  позначатимемо через $\mathbb{N}$.

Якщо визначене часткове відображення $\alpha\colon X\rightharpoonup Y$ з множини $X$ у множину $Y$, то через $\operatorname{dom}\alpha$ i $\operatorname{ran}\alpha$ будемо позначати його \emph{область визначення} та \emph{область значень}, відповідно, а через $(x)\alpha$ та $(A)\alpha$ --- образи елемента $x\in\operatorname{dom}\alpha$ та підмножини $A\subseteq\operatorname{dom}\alpha$ при частковому відображенні $\alpha$, відповідно. Часткове відображення $\alpha\colon X\rightharpoonup Y$ називається \emph{ко-скінченним}, якщо множини $X\setminus\operatorname{dom}\alpha$ та $Y\setminus\operatorname{ran}\alpha$ є скінченними.

Через $\mathscr{I}_\lambda$ позначимо множину усіх часткових взаємно однозначних перетворень множини $X$ потужності $\lambda$ разом з такою напівгруповою операцією:
$
(x)(\alpha\beta)=((x)\alpha)\beta,
$
якщо
$x\in\operatorname{dom}(\alpha\beta)=\left\{
y\in\operatorname{dom}\alpha\colon
(y)\alpha\in\operatorname{dom}\beta\right\}$, для
$\alpha,\beta\in\mathscr{I}_\lambda$.
Напівгрупа $\mathscr{I}_\lambda$ називається  \emph{симеричним ін\-верс\-ним моноїдом} (або \emph{симетричною ін\-верс\-ною напівгрупою}) над множиною $X$~(див [7, \S1.9].
Симетрична інверсна на\-пів\-гру\-па вперше введена В.В.~Вагнером у праці~[2] і вона відіграє дуже важливу роль в алгебраїчній теорії  напівгруп.

Рефлексивне, антисиметричне та транзитивне відношення на множині $X$ назива\-єть\-ся \emph{частковим порядком} на $X$. Множина $X$ із заданим на ній частковим порядком $\leqslant$ називається \emph{частково впорядкованою множиною} і позначається $(X,\leqslant)$.

Елемент $x$ частково впорядкованої множини $(X,\leqslant)$ називається \emph{найбільшим} (\emph{найменшим}) в $(X,\leqslant)$, якщо $y\leqslant x$ ($x\leqslant y$) для всіх $y\in X$.

У випадку, якщо $(X,\leqslant)$~--- частково впорядкована множина й $x\leqslant y$, для деяких $x,y\in X$, то будемо говорити, що елементи $x$ і $y$~є \emph{порівняльними} в $(X,\leqslant)$. Якщо ж для елементів $x, y$ частково впорядкованої множини $(X,\leqslant)$ не виконується жодне з відношень $x\leqslant y$ або $y\leqslant x$, то говоритимемо, що елементи $x$ і $y$ є \emph{непорівняльними} в $(X,\leqslant)$. Частковий порядок $\leqslant$ на $X$ називається \emph{лінійним}, якщо довільні два елементи в $(X,\leqslant)$ є порівняльними. У цьому випадку ми будемо говорити, що $(X,\leqslant)$ є \emph{лінійно впорядкованою множиною} або \emph{ланцюгом}.

Відображення $h\colon X\rightarrow Y$ з частково впо\-ряд\-кованої множини $(X,\leqslant)$ в частково впорядковану множину $(Y,\leqslant)$ назива\-єть\-ся \emph{монотонним}, якщо з $x\leqslant y$ випливає $(x)h\leqslant (y)h$. Монотонне бієктивне відображення $h\colon (X,\leqslant)\rightarrow (Y,\leqslant)$ частково впорядкованих множин, обернене до якого є монотонним, називається \emph{порядковим ізо\-мор\-фіз\-мом}. Лінійно впорядкована множина, яка порядково ізоморфна $(\mathbb{N},\geqslant)$ називається \emph{$\omega$-ланцюгом}.

Якщо $S$~--- напівгрупа, то її підмножина ідемпотентів позначається через $E(S)$.  На\-пів\-гру\-па $S$ називається \emph{інверсною}, якщо для довільного її елемента $x$ існує єдиний елемент $x^{-1}\in S$ такий, що $xx^{-1}x=x$ та $x^{-1}xx^{-1}=x^{-1}$ [2]. В інверсній напівгрупі $S$ вище означений елемент $x^{-1}$ називається \emph{інверсним до} $x$. \emph{В'язка}~--- це напівгрупа ідемпотентів, а \emph{напівгратка}~--- це комутативна в'язка. Надалі через $(\mathscr{P}_{\!\infty}(X),\cup)$ по\-зна\-ча\-ти\-ме\-мо \emph{вільну напівгратку} з одиницею над не\-порож\-ньою множиною $X$, тобто множину усіх скінченних (включно з по\-рож\-ньою) підмножин множини $X$ з операцією ``об'єднання''.

Якщо $S$ --- напівгрупа, то ми позначатимемо відношення Ґріна на $S$ через $\mathscr{R}$, $\mathscr{L}$, $\mathscr{D}$, $\mathscr{H}$ і $\mathscr{J}$ (див. означення в [7, \S2.1]. Напівгрупа $S$ називається \emph{простою}, якщо $S$ не містить власних двобічних ідеалів, тобто $S$ складається з одного $\mathscr{J}$-класу.

Відношення еквівалентності $\mathfrak{K}$ на напівгрупі $S$ називається \emph{конгруенцією}, якщо для елементів $a$ та $b$ напівгрупи $S$ з того, що виконується умова $(a,b)\in\mathfrak{K}$ випливає, що $(ca,cb), (ad,bd) \in\mathfrak{K}$, для всіх $c,d\in S$. Відношення $(a,b)\in\mathfrak{K}$ ми також будемо записувати $a\mathfrak{K}b$, і в цьому випадку будемо говорити, що \emph{елементи $a$ i $b$ є $\mathfrak{K}$-еквівалентними}.

Якщо $S$~--- напівгрупа, то на $E(S)$ визначено частковий порядок:
$
e\preccurlyeq f
$   тоді і лише тоді, коли
$ef=fe=e$.
Так означений частковий порядок на $E(S)$ називається \emph{при\-род\-ним}.

Означимо відношення $\preccurlyeq$ на інверсній напівгрупі $S$ так:
$
    s\preccurlyeq t
$
тоді і лише тоді, коли $s=te$.
для деякого ідемпотента $e\in S$. Так означений частковий порядок назива\-єть\-ся \emph{при\-род\-ним част\-ковим порядком} на інверсній напівгрупі $S$~[9]. Очевидно, що звуження природного часткового порядку $\preccurlyeq$ на інверсній напівгрупі $S$ на її в'язку $E(S)$ є при\-род\-ним частковим порядком на $E(S)$.

Часткове перетворення $\alpha\colon (X,d)\rightharpoonup (X,d)$ метричного простору $(X,d)$ на\-зи\-ва\-єть\-ся \emph{ізо\-мет\-рич\-ним} або \emph{частковою ізометрією}, якщо $d(x\alpha,y\alpha)=d(x,y)$ для довільних $x,y\in (X,d)$. Очевидно, що композиція двох часткових ізометрій метричного прос\-то\-ру $(X,d)$ є знову част\-ко\-вою ізометрією, а також, що обернене часткове відображення до часткової ізо\-мет\-рії є частковою ізометрією. Таким чином, часткові ізометрії метричного простору $(X,d)$ стосовно операції композиції спсткових перетворень є інверсним підмоноїдом симет\-рич\-ного інверсного моноїда над множиною $X$.

Напівгрупа $\mathbf{ID}_{\infty}$ усіх часткових коскінченних ізометрій множини цілих чисел $\mathbb{Z}$ означена в праці Безущак [6], де описані її твірні та доведено, що вона має експоненціальний ріст. Зауважимо, що напівгрупа $\mathbf{ID}_{\infty}$ є інверсною і є, очевидно, піднапівгрупою напівгрупи всіх часткових коскінченних бієкцій множини цілих чисел $\mathbb{Z}$, а елементи напівгрупи $\mathbf{ID}_{\infty}$ --- це саме звуження ізометрій множини цілих чисел $\mathbb{Z}$ на коскінченні підмножини в розумінні Лоусона (див. [9, c. 9]. У праці [1] описані відношення Ґріна та головні ідеали напівгрупи $\mathbf{ID}_{\infty}$. У [3] доведено, що фактор-напівгрупа $\mathbf{ID}_{\infty}/\mathfrak{C}_{\mathbf{mg}}$ за мінімальною груповою конгруенцією $\mathfrak{C}_{\mathbf{mg}}$ ізоморфна групі ${\mathbf{Iso}}(\mathbb{Z})$ усіх ізометрій множини $\mathbb{Z}$, напівгрупа $\mathbf{ID}_{\infty}$ є $F$-інверсною напівгрупою, а також, що напівгрупа $\mathbf{ID}_{\infty}$ ізоморфна напівпрямому до\-бут\-ку ${\mathbf{Iso}}(\mathbb{Z})\ltimes_\mathfrak{h}\mathscr{P}_{\!\infty}(\mathbb{Z})$ вільної напівгратки з одиницею $(\mathscr{P}_{\!\infty}(\mathbb{Z}),\cup)$ групою ${\mathbf{Iso}}(\mathbb{Z})$. Також, у [3] досліджувалась топологізація напівгрупи $\mathbf{ID}_{\infty}$ та задача ізоморф\-ного занурення дискретної напівгрупи $\mathbf{ID}_{\infty}$ у гаусдорфові топологічні напівгрупи близькі до ком\-пакт\-них.

Нехай $\mathbf{I}\mathbb{N}_{\infty}$ --- множина усіх часткових коскінченних ізометрій множини натуральних чисел $\mathbb{N}$ зі звичайною метрикою $d(n,m)=|n-m|$, $n,m\in \mathbb{N}$. Оскільки множина $\mathbf{I}\mathbb{N}_{\infty}$ замк\-не\-на стосовно операції композиції част\-ко\-вих відображень та взяття оберненого част\-ко\-во\-го відображення, то $\mathbf{I}\mathbb{N}_{\infty}$ --- інверсний підмоноїд симетричного інверсного моноїда $\mathscr{I}_\omega$. Через $\mathbb{I}$ позначатимемо тотожне ві\-доб\-ра\-жен\-ня множини натуральних чисел $\mathbb{N}$. Очевидно, що $\mathbb{I}$~--- одиниця моноїда $\mathbf{I}\mathbb{N}_{\infty}$.

%\smallskip

У цій праці ми досліджуємо алгебраїчні властивості напівгрупи $\mathbf{I}\mathbb{N}_{\infty}$. Зокрема, описуємо відношення Ґріна на напівгрупі $\mathbf{I}\mathbb{N}_{\infty}$, її в'язку та доводимо, що $\mathbf{I}\mathbb{N}_{\infty}$ --- проста $E$-унітарна $F$-інверсна напівгрупа. Описана найменша групова конгруенція $\mathfrak{C}_{\mathbf{mg}}$ на напівгрупі $\mathbf{I}\mathbb{N}_{\infty}$ та доведено, що фактор-напівгрупа $\mathbf{I}\mathbb{N}_{\infty}/\mathfrak{C}_{\mathbf{mg}}$ ізоморфна адитивній групі цілих чисел. Наведено приклад конгруенції на напівгрупі $\mathbf{I}\mathbb{N}_{\infty}$, яка не є груповою. Також доведено, що конгруенція на $\mathbf{I}\mathbb{N}_{\infty}$ є груповою тоді і лише тоді, коли її звуження на довільну піднапігрупу $S$ в $\mathbf{I}\mathbb{N}_{\infty}$, яка ізоморфна біциклічній напівгрупі, є груповою конгруенцією на $S$.

%\smallskip

Нехай $\mathbb{Z}$~--- множина цілих чисел. Ві\-доб\-ра\-жен\-ня $f\colon\mathbb{Z}\to\mathbb{Z}$, означене за формулою $(z)f=z+z_0$, де $z_0$~--- деяке ціле число будемо називати \emph{зсувом множини цілих чисел}.  Часткове відображення $\alpha\colon \mathbb{N}\rightharpoonup \mathbb{N}$ на\-зи\-ва\-єть\-ся
\emph{звуженням часткового зсуву множини натуральних чисел}, якщо існують зсув множини цілих чисел $f\colon\mathbb{Z}\to\mathbb{Z}$ і $A\subseteq \mathbb{N}$ такі, що $\operatorname{dom}\alpha=A$ i $(x)f=(x)\alpha$, для всіх $x\in\operatorname{dom}\alpha$. Якщо $\alpha\colon \mathbb{N}\rightharpoonup \mathbb{N}$~--- звуженням часткового зсуву множини натуральних чисел i $B\subseteq\operatorname{dom}\alpha$, то про образ $(B)\alpha$ будемо називати \emph{зсувом} множини $B$.

\begin{lemma}\label{lemma-1}
Кожен елемент напівгрупи $\mathbf{I}\mathbb{N}_{\infty}$ є монотонною частковою бієкцією лінійно впорядкованої множини $(\mathbb{N},\leqslant)$. Більше того, кожен елемент напівгрупи $\mathbf{I}\mathbb{N}_{\infty}$ є звуженням часткового зсуву множини натуральних чисел на коскінченну підмножину в $\mathbb{N}$.
\end{lemma}

\begin{proof}
Зафіксуємо довільний елемент $\alpha$ напівгрупи $\mathbf{I}\mathbb{N}_{\infty}$. Позаяк $\alpha$~--- коскінченна часткова бієкція множини $\mathbb{N}$ і $(\mathbb{N},\leqslant)$~--- цілком впорядкована множина, то існує найменше натуральне число $n_{\alpha}$ таке, що $n\in\operatorname{dom}\alpha$ для всіх натуральних $n\geqslant n_{\alpha}$. Також, оскільки $\alpha$~--- часткова коскінченна ізо\-мет\-рія множини натуральних чисел, то
\begin{equation*}
  d((n_{\alpha}+1)\alpha,(n_{\alpha})\alpha)= \left|(n_{\alpha}+1)\alpha-(n_{\alpha})\alpha\right|=1,
\end{equation*}
а отже виконується одна з умов
\begin{equation*}
  (n_{\alpha}+1)\alpha=(n_{\alpha})\alpha+1 \; \hbox{або} \; (n_{\alpha}+1)\alpha=(n_{\alpha})\alpha-1.
\end{equation*}
Припустимо, що $(n_{\alpha}+1)\alpha=(n_{\alpha})\alpha-1$. Тоді за індукцією, оскільки $\alpha$~--- часткова коскінченна ізометрія множини натуральних чисел, то отримуємо, що $(n_{\alpha}+i)\alpha=(n_{\alpha})\alpha-i$ для довільного натурального числа $i$, що суперечить тому, що множина натуральних чисел має найменший елемент. З отриманого протиріччя випливає, що виконується рівність $(n_{\alpha}+1)\alpha=(n_{\alpha})\alpha+1$. Аналогічно, за індукцією, оскільки $\alpha$~--- часткова коскінченна ізометрія множини натуральних чисел, то отримуємо, що $(n_{\alpha}+i)\alpha=(n_{\alpha})\alpha+i$ для довільного натурального числа $i\geqslant2$. Також з вище доведеного випливає, що $(n)\alpha=(n_{\alpha})\alpha-n_{\alpha}+n$ для довільного $n\in\operatorname{dom}\alpha$, а отже виконується друге твердження леми.
\end{proof}

Через $\mathscr{I}_{\infty}^{\!\nearrow}(\mathbb{N})$ позначимо напівгрупу монотонних коскінченних часткових бієкцій множини натуральних чисел (див. [8]). Ос\-кіль\-ки існують монотонні коскінченні част\-ко\-ві бієкції множини натуральних чисел, які не є частковими ізометріями, то з леми~\ref{lemma-1} випливає

\begin{corollary}\label{corollary-2}
$\mathbf{I}\mathbb{N}_{\infty}$~--- власний підмоноїд в $\mathscr{I}_{\infty}^{\!\nearrow}(\mathbb{N})$.
\end{corollary}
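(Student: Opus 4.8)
The plan is to read the corollary off directly from Lemma~\ref{lemma-1}, combined with the closure properties of $\mathbf{I}\mathbb{N}_{\infty}$ that were recorded when the semigroup was first introduced. By definition $\mathscr{I}_{\infty}^{\!\nearrow}(\mathbb{N})$ is the inverse semigroup of \emph{all} partial monotone co-finite bijections of $(\mathbb{N},\leqslant)$, so to obtain the statement I would verify two things: first, the containment $\mathbf{I}\mathbb{N}_{\infty}\subseteq\mathscr{I}_{\infty}^{\!\nearrow}(\mathbb{N})$, i.e.\ that every element of $\mathbf{I}\mathbb{N}_{\infty}$ is such a bijection; and second, that $\mathbf{I}\mathbb{N}_{\infty}$ is closed under both the multiplication and the inversion inherited from the symmetric inverse semigroup $\mathscr{I}_{\omega}$, so that the inclusion is that of an inverse subsemigroup.

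For the containment I would fix an arbitrary $\alpha\in\mathbf{I}\mathbb{N}_{\infty}$. By its very definition $\alpha$ is a partial co-finite isometry of $\mathbb{N}$, hence a partial bijection whose domain and range are co-finite subsets of $\mathbb{N}$. Lemma~\ref{lemma-1} supplies precisely the missing order-theoretic information: it shows that $\alpha$ is a monotone (order-preserving) partial bijection of $(\mathbb{N},\leqslant)$, being in fact the restriction to $\operatorname{dom}\alpha$ of the shift $z\mapsto z+((n_{\alpha})\alpha-n_{\alpha})$ of the additive group of integers. A partial map that is simultaneously a co-finite bijection and monotone is by definition an element of $\mathscr{I}_{\infty}^{\!\nearrow}(\mathbb{N})$, so $\alpha\in\mathscr{I}_{\infty}^{\!\nearrow}(\mathbb{N})$, and the inclusion $\mathbf{I}\mathbb{N}_{\infty}\subseteq\mathscr{I}_{\infty}^{\!\nearrow}(\mathbb{N})$ follows.

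It then remains to check that this inclusion respects the inverse-semigroup structure. Both $\mathbf{I}\mathbb{N}_{\infty}$ and $\mathscr{I}_{\infty}^{\!\nearrow}(\mathbb{N})$ carry the multiplication and the inversion of $\mathscr{I}_{\omega}$, and it was already observed, when $\mathbf{I}\mathbb{N}_{\infty}$ was defined, that the composition of two partial co-finite isometries is again a partial co-finite isometry and that the $\mathscr{I}_{\omega}$-inverse of a partial co-finite isometry is a partial co-finite isometry; that is, $\mathbf{I}\mathbb{N}_{\infty}$ is already an inverse subsemigroup of $\mathscr{I}_{\omega}$. Since the operations on the larger semigroup $\mathscr{I}_{\infty}^{\!\nearrow}(\mathbb{N})$ are literally the restrictions of the corresponding operations on $\mathscr{I}_{\omega}$, the closure of $\mathbf{I}\mathbb{N}_{\infty}$ inside $\mathscr{I}_{\omega}$ is \emph{a fortiori} closure inside $\mathscr{I}_{\infty}^{\!\nearrow}(\mathbb{N})$, whence $\mathbf{I}\mathbb{N}_{\infty}$ is an inverse subsemigroup of $\mathscr{I}_{\infty}^{\!\nearrow}(\mathbb{N})$.

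I do not expect a genuine obstacle here: the substantive work sits entirely in Lemma~\ref{lemma-1}, and the corollary is essentially a packaging statement. The only point deserving a line of care is that $\mathscr{I}_{\infty}^{\!\nearrow}(\mathbb{N})$ is itself closed under the $\mathscr{I}_{\omega}$-inverse, so that ``inverse subsemigroup'' is the appropriate notion; this is immediate, since the inverse of a monotone co-finite bijection is again monotone and co-finite and agrees with the isometric inverse on $\mathbf{I}\mathbb{N}_{\infty}$. With that remark in place, the containment and closure established above complete the argument.
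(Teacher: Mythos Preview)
Your argument for the inclusion $\mathbf{I}\mathbb{N}_{\infty}\subseteq\mathscr{I}_{\infty}^{\!\nearrow}(\mathbb{N})$ and for closure under the inverse-semigroup operations is correct, and it is exactly how the paper obtains the ``sub'' part of the corollary: Lemma~\ref{lemma-1} supplies monotonicity, and the earlier observation that $\mathbf{I}\mathbb{N}_{\infty}$ is an inverse subsemigroup of $\mathscr{I}_\omega$ takes care of closure.

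There is, however, one piece you have not addressed. The corollary asserts that $\mathbf{I}\mathbb{N}_{\infty}$ is a \emph{proper} submonoid of $\mathscr{I}_{\infty}^{\!\nearrow}(\mathbb{N})$, i.e.\ that the inclusion is strict. The paper disposes of this in the sentence immediately preceding the corollary: it is easy to exhibit a monotone co-finite partial bijection of $\mathbb{N}$ that is \emph{not} a partial isometry (for instance, the map fixing $1$ and sending $n\mapsto n+1$ for all $n\geqslant 2$ is an order-preserving bijection of $\mathbb{N}$ onto $\mathbb{N}\setminus\{2\}$, but it changes the distance between $1$ and $2$). Your proposal establishes only containment and closure, so as written it proves ``inverse subsemigroup'' but not ``proper''; adding a single such example completes the argument and brings it in line with the paper.
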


\begin{proposition}\label{proposition-3}
\begin{itemize}
  \item[$(i)$] $E(\mathbf{I}\mathbb{N}_{\infty})=E(\mathscr{I}_{\infty}^{\!\nearrow}(\mathbb{N}))$ в $\mathscr{I}_{\infty}^{\!\nearrow}(\mathbb{N})$, а отже напівгратка $E(\mathbf{I}\mathbb{N}_{\infty})$ ізоморфна вільній напівгратці з одиницею $(\mathscr{P}_{\!\infty}(\mathbb{N}),\cup)$, і цей ізоморфізм визначається відображенням
      $(\varepsilon)h=\mathbb{N}\setminus\operatorname{dom}\varepsilon$.

  \item[$(ii)$] Якщо $\varepsilon,\iota\in E(\mathbf{I}\mathbb{N}_{\infty})$, то $\varepsilon\leqslant\iota$ тоді і лише тоді, коли           $\operatorname{dom}\varepsilon\subseteq\operatorname{dom}\iota$.

  \item[$(iii)$] Кожен максимальний ланцюг у напівгратці $E(\mathbf{I}\mathbb{N}_{\infty})$ є $\omega$-ланцюгом.

  \item[$(iv)$] $\alpha\mathscr{L}\beta$ в $\mathbf{I}\mathbb{N}_{\infty}$ тоді і лише тоді, коли $\operatorname{dom}\alpha=\operatorname{dom}\beta$.

  \item[$(v)$] $\alpha\mathscr{R}\beta$ в $\mathbf{I}\mathbb{N}_{\infty}$ тоді і лише тоді, коли $\operatorname{ran}\alpha=\operatorname{ran}\beta$.

  \item[$(vi)$] $\alpha\mathscr{H}\beta$ в  $\mathbf{I}\mathbb{N}_{\infty}$ тоді і лише тоді, коли $\alpha=\beta$.

  \item[$(vii)$] $\alpha\mathscr{D}\beta$ в  $\mathbf{I}\mathbb{N}_{\infty}$ тоді і лише тоді, коли $\operatorname{dom}\alpha$ $(\operatorname{ran}\alpha)$ є зсувом множини $\operatorname{dom}\beta$ $(\operatorname{ran}\beta)$.
\end{itemize}
\end{proposition}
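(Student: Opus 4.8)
The plan is to derive all seven assertions from Lemma~\ref{lemma-1}, which says that every $\alpha\in\mathbf{I}\mathbb{N}_{\infty}$ is the restriction of a single shift $z\mapsto z+z_\alpha$ of $\mathbb{Z}$ to the co-finite set $\operatorname{dom}\alpha$. As a preliminary I would record that $\mathbf{I}\mathbb{N}_{\infty}$ is an \emph{inverse} subsemigroup of $\mathscr{I}_\omega$: the set-theoretic inverse $\alpha^{-1}$ of a co-finite isometry is again a co-finite isometry (the restriction of $z\mapsto z-z_\alpha$ to the co-finite set $\operatorname{ran}\alpha$), so $\mathbf{I}\mathbb{N}_{\infty}$ is closed under inversion and inverses are unique. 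This lets me compute Green's relations inside $\mathbf{I}\mathbb{N}_{\infty}$ by the standard inverse-semigroup formulas $\alpha\mathscr{L}\beta\Leftrightarrow\alpha^{-1}\alpha=\beta^{-1}\beta$, $\alpha\mathscr{R}\beta\Leftrightarrow\alpha\alpha^{-1}=\beta\beta^{-1}$, $\mathscr{H}=\mathscr{L}\cap\mathscr{R}$ and $\mathscr{D}=\mathscr{L}\circ\mathscr{R}$, where $\alpha^{-1}\alpha$ and $\alpha\alpha^{-1}$ are the partial identities on $\operatorname{dom}\alpha$ and $\operatorname{ran}\alpha$ respectively.

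For (i) I would observe that an idempotent of either semigroup is exactly a partial identity, and that the partial identities lying in $\mathbf{I}\mathbb{N}_{\infty}$ are precisely those on co-finite subsets of $\mathbb{N}$; since such a partial identity is simultaneously an isometry and a monotone bijection, the two idempotent sets coincide. Writing $\varepsilon_A$ for the identity on a co-finite $A$, the relation $\varepsilon_A\varepsilon_B=\varepsilon_{A\cap B}$ gives $\mathbb{N}\setminus(A\cap B)=(\mathbb{N}\setminus A)\cup(\mathbb{N}\setminus B)$, so the bijection $h\colon\varepsilon\mapsto\mathbb{N}\setminus\operatorname{dom}\varepsilon$ onto the finite subsets of $\mathbb{N}$ carries multiplication to union; hence $h$ is an isomorphism onto $(\mathscr{P}_{\!\infty}(\mathbb{N}),\cup)$. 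Part (ii) is then immediate: $\varepsilon\preccurlyeq\iota$ means $\varepsilon\iota=\varepsilon$, i.e. $\varepsilon_{A\cap B}=\varepsilon_A$, i.e. $A\subseteq B$, which is $\operatorname{dom}\varepsilon\subseteq\operatorname{dom}\iota$ (and this coincides with the order $\leqslant$ on $E(\mathbf{I}\mathbb{N}_{\infty})$). For (iii) I would use (ii) to identify $(E(\mathbf{I}\mathbb{N}_{\infty}),\leqslant)$ with the co-finite subsets of $\mathbb{N}$ ordered by inclusion. Its top element is $\mathbb{I}$ (domain $\mathbb{N}$), and any maximal chain must contain $\mathbb{I}$; along such a chain the complement must grow by exactly one point at each step, for otherwise an intermediate partial identity could be inserted. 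Thus the chain is $\mathbb{I}=\varepsilon_0>\varepsilon_1>\varepsilon_2>\cdots$ with $|\mathbb{N}\setminus\operatorname{dom}\varepsilon_k|=k$ and no lower bound, and $\varepsilon_k\mapsto k+1$ is an order isomorphism onto $(\mathbb{N},\geqslant)$, so the chain is an $\omega$-chain.

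Parts (iv) and (v) fall out of the formulas above, since $\alpha^{-1}\alpha=\varepsilon_{\operatorname{dom}\alpha}$ and $\alpha\alpha^{-1}=\varepsilon_{\operatorname{ran}\alpha}$, and two partial identities agree iff their domains do. For (vi) note that $\alpha\mathscr{H}\beta$ forces $\operatorname{dom}\alpha=\operatorname{dom}\beta$ and $\operatorname{ran}\alpha=\operatorname{ran}\beta$; by Lemma~\ref{lemma-1} each of $\alpha,\beta$ is the unique monotone bijection of its domain onto its range (least element to least element, and so on), whence $\alpha=\beta$.

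Finally, (vii) is where the real work lies. From $\mathscr{D}=\mathscr{L}\circ\mathscr{R}$, the relation $\alpha\mathscr{D}\beta$ holds iff there is $\gamma\in\mathbf{I}\mathbb{N}_{\infty}$ with $\operatorname{dom}\gamma=\operatorname{dom}\alpha$ and $\operatorname{ran}\gamma=\operatorname{ran}\beta$. By Lemma~\ref{lemma-1} any such $\gamma$ is a single shift restricted to $\operatorname{dom}\alpha$, so its range is a translate of its domain; hence $\gamma$ exists precisely when $\operatorname{ran}\beta$ is a translate of $\operatorname{dom}\alpha$. The main obstacle is to convert this into the symmetric statement of the proposition. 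Here I would use that, again by Lemma~\ref{lemma-1}, $\operatorname{ran}\alpha$ is a translate of $\operatorname{dom}\alpha$ and $\operatorname{ran}\beta$ a translate of $\operatorname{dom}\beta$, so the condition ``$\operatorname{ran}\beta$ is a translate of $\operatorname{dom}\alpha$'' is equivalent both to ``$\operatorname{dom}\alpha$ is a translate of $\operatorname{dom}\beta$'' and to ``$\operatorname{ran}\alpha$ is a translate of $\operatorname{ran}\beta$''; one also checks that any translate of a co-finite subset of $\mathbb{N}$ which lies in $\mathbb{N}$ is again co-finite, so that the candidate $\gamma$ genuinely belongs to $\mathbf{I}\mathbb{N}_{\infty}$.
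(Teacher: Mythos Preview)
Your argument is correct, and for part~(vii) it coincides with the paper's: both reduce $\alpha\mathscr{D}\beta$ via $\mathscr{D}=\mathscr{L}\circ\mathscr{R}$ to the existence of some $\gamma\in\mathbf{I}\mathbb{N}_{\infty}$ with $\operatorname{dom}\gamma=\operatorname{dom}\alpha$ and $\operatorname{ran}\gamma=\operatorname{ran}\beta$, and then invoke Lemma~\ref{lemma-1} to recognise this as the translate condition. The genuine difference is in parts (i)--(vi). The paper does not argue these from scratch; it notes via Corollary~\ref{corollary-2} that $E(\mathbf{I}\mathbb{N}_{\infty})=E(\mathscr{I}_{\infty}^{\!\nearrow}(\mathbb{N}))$ and then imports all of (i)--(vi) from Proposition~2.1 of the Gutik--Repov\v{s} paper~[8] on $\mathscr{I}_{\infty}^{\!\nearrow}(\mathbb{N})$. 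Your route is more elementary and self-contained: you rebuild the semilattice description, the natural order, the $\omega$-chain property, and the Green's-relation characterisations directly from Lemma~\ref{lemma-1} and the standard inverse-semigroup identities, without passing through the ambient monoid $\mathscr{I}_{\infty}^{\!\nearrow}(\mathbb{N})$ or citing~[8]. This buys independence from an external reference at the price of a little extra length; the paper's approach buys brevity by leaning on already-published structure theory for the larger semigroup.

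One small caution on conventions: with the paper's right-action rule $(x)(\alpha\beta)=((x)\alpha)\beta$ one gets $\alpha\alpha^{-1}=\varepsilon_{\operatorname{dom}\alpha}$ and $\alpha^{-1}\alpha=\varepsilon_{\operatorname{ran}\alpha}$, the reverse of what you wrote. Your final statements for (iv)--(v) still match the proposition as stated, but you should recheck which of $\alpha\alpha^{-1}$, $\alpha^{-1}\alpha$ carries $\operatorname{dom}$ versus $\operatorname{ran}$ so that the intermediate formulas are internally consistent with the paper's composition convention.
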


\begin{proof}
$(i)$ Позаяк кожне часткове коскінченне тотожне перетворення множини натуральних чисел є частковою ізометрією, то за наслідком~\ref{corollary-2} маємо, що $E(\mathbf{I}\mathbb{N}_{\infty})=E(\mathscr{I}_{\infty}^{\!\nearrow}(\mathbb{N}))$ в $\mathscr{I}_{\infty}^{\!\nearrow}(\mathbb{N})$. Останнє твердження є наслідком тверд\-жен\-ня~2.1$(vii)$ з [8].

Твердження $(ii)$--$(vi)$ є наслідками тверд\-жен\-ня~2.1 з [8].

$(vii)$ Еквівалентність того, шо множина $\operatorname{dom}\alpha$ $(\operatorname{ran}\alpha)$ є зсувом множини $\operatorname{dom}\beta$ $(\operatorname{ran}\beta)$ випливає з другого твердження леми~\ref{lemma-1}.

За означенням відношення  Ґріна $\mathscr{D}$ маємо, що $\mathscr{D}=\mathscr{L}\circ\mathscr{R}=\mathscr{R}\circ\mathscr{L}=\mathscr{R}\bigvee\mathscr{L}$, і оскільки $\alpha\mathscr{L}\alpha\alpha^{-1}$ i $\beta\mathscr{R}\beta^{-1}\beta$, то $\alpha\mathscr{D}\beta$ в  $\mathbf{I}\mathbb{N}_{\infty}$ тоді і лише тоді, коли $\alpha\alpha^{-1}\mathscr{D}\beta^{-1}\beta$,. Тоді за твердженням~3.2.5 з [9] існує елемент $\gamma\in\mathbf{I}\mathbb{N}_{\infty}$ такий, що $\gamma\gamma^{-1}=\alpha\alpha^{-1}$ i $\gamma^{-1}\gamma=\beta^{-1}\beta$. Останні дві рівності виконуються тоді і лише тоді, коли $\operatorname{dom}\alpha=\operatorname{dom}\gamma$ і $\operatorname{ran}\gamma=\operatorname{ran}\beta$, а отже умова $\alpha\mathscr{D}\beta$ еквівалентна умові, що $\operatorname{dom}\alpha$ є зсувом множини $\operatorname{ran}\beta$.
\end{proof}

\begin{theorem}\label{theorem-4}
$\mathbf{I}\mathbb{N}_{\infty}$~--- проста напівгрупа.
\end{theorem}

\begin{proof}
Оскільки $\alpha=\alpha\mathbb{I}=\mathbb{I}\alpha$ для довільного елемента $\alpha$ напівгрупи $\mathbf{I}\mathbb{N}_{\infty}$, то нам достатньо довести, що для довільного елемента $\beta$ напівгрупи $\mathbf{I}\mathbb{N}_{\infty}$ існують $\gamma,\delta\in\mathbf{I}\mathbb{N}_{\infty}$ такі, що $\gamma\beta\delta=\mathbb{I}$.

Зафіксуємо довільний елемент $\beta$ напівгрупи $\mathbf{I}\mathbb{N}_{\infty}$. Оскільки за лемою~\ref{lemma-1} кожен елемент напівгрупи $\mathbf{I}\mathbb{N}_{\infty}$ є звуженням часткового зсуву множини натуральних чисел на коскінченну підмножину в $\mathbb{N}$ і $(\mathbb{N},\leqslant)$~--- цілком впорядкована множина, то існує найменше натуральне число $n_{\beta}^{\mathbf{d}}\in\operatorname{dom}\beta$ таке, що $n\in\operatorname{dom}\beta$ для всіх натуральних $n\geqslant n_{\beta}^{\mathbf{d}}$ та існує найменше натуральне число $n_{\beta}^{\mathbf{r}}\in\operatorname{ran}\beta$ таке, що $n\in\operatorname{dom}\beta$ для всіх натуральних $n\geqslant n_{\beta}^{\mathbf{r}}$. Покладемо
$$
\operatorname{dom}\gamma=\mathbb{N}, \qquad \operatorname{ran}\gamma=\left\{n\in\mathbb{N}\colon n\geqslant n_{\beta}^{\mathbf{d}}\right\},
$$
$$(i)\gamma=i-1+n_{\beta}^{\mathbf{d}} \qquad \hbox{для всіх} \; i\in\operatorname{dom}\gamma
$$
i
$$
\operatorname{dom}\delta=\left\{n\in\mathbb{N}\colon n\geqslant n_{\beta}^{\mathbf{r}}\right\}, \quad \operatorname{ran}\delta=\mathbb{N},
$$
$$
(j)\delta=i-n_{\beta}^{\mathbf{r}}+1 \qquad \hbox{для всіх}\;  j\in\operatorname{dom}\delta.
$$
Тоді з леми~\ref{lemma-1} випливає, що $\gamma\beta\delta=\mathbb{I}$.
\end{proof}

Наступне очевидне твердження описує природний частковий порядок на напівгрупі $\mathbf{I}\mathbb{N}_{\infty}$ і воно випливає з описання природного часткового порядку на симетричному інверсному моноїді, оскільки $\mathbf{I}\mathbb{N}_{\infty}$ є інверсним підмоноїдом симетричного інверсного моноїда $\mathscr{I}_\omega$ над множиною натуральних чисел $\mathbb{N}$.

\begin{proposition}\label{proposition-5}
Для елементів $\alpha$ i $\beta$ напівгрупи $\mathbf{I}\mathbb{N}_{\infty}$ такі умови є еквівалентними:
\begin{itemize}
  \item[$(i)$] $\alpha\preccurlyeq\beta$ в $\mathbf{I}\mathbb{N}_{\infty}$;
  \item[$(ii)$] часткове відображення $\alpha$ є звуженням часткового відображення $\beta$ на $\operatorname{dom}\alpha$;
  \item[$(iii)$] часткове відображення $\alpha$ є ко\-зву\-жен\-ням\footnote{Нехай $\alpha\colon X\rightharpoonup Y$~--- часткове відображення та $B$~--- підмножина в $Y$. Під козвуженням часткове відображення $\alpha$ будемо розуміти часткове відображення $\alpha{\downharpoonright}_{B}\colon X\rightharpoonup Y$ з $\operatorname{dom}\alpha{\downharpoonright}_{B}=\{x\in X\colon (x)\alpha\in B\}$ i $\operatorname{ran}\alpha{\downharpoonright}_{B}=B$.} часткового відображення $\beta$ на $\operatorname{ran}\alpha$.
\end{itemize}
\end{proposition}

Нагадаємо [9], шо інверсна напівгрупа $S$ називається \emph{$E$-унітарною}, якщо $ex$~-- ідемпотент в $S$ для деякого ідемпотента $e\in S$ та $x\in S$, то $x$~-- ідемпотент напівгрупи $S$. Тоді з твердження~\ref{proposition-5} i другої частини леми~\ref{lemma-1} випливає:

\begin{corollary}\label{corollary-6}
$\mathbf{I}\mathbb{N}_{\infty}$~--- $E$-унітарна інверсна напівгрупа.
\end{corollary}

\emph{Найменша групова конгруенція} $\mathfrak{C}_{\mathbf{mg}}$ на інверсній напівгрупі $S$ визначається так (див. [11, III.5]:
$s\mathfrak{C}_{\mathbf{mg}}t$  в $S$ тоді і лише тоді, коли існує ідемпотент $e\in S$  такий, що $es=et$.

Наступне твердження описує найменшу гру\-пову конгруенцію $\mathfrak{C}_{\mathbf{mg}}$ на напівгрупі~$\mathbf{I}\mathbb{N}_{\infty}$.

\begin{proposition}\label{proposition-7}
Для елементів $\alpha$ та $\beta$ напівгрупи $\mathbf{I}\mathbb{N}_{\infty}$ такі умови є еквівалентними:
\begin{itemize}
  \item[$(i)$] $\alpha\mathfrak{C}_{\mathbf{mg}}\beta$;
  \item[$(ii)$] існує натуральне число $i\in\operatorname{dom}\alpha\cap\operatorname{dom}\beta$ таке, що $(i)\alpha=(i)\beta$;
  \item[$(iii)$] $(i)\alpha=(i)\beta$ для всіх $i\in\operatorname{dom}\alpha\cap\operatorname{dom}\beta$.
\end{itemize}
\end{proposition}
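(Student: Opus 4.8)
The plan is to reduce everything to the explicit description of the elements of $\mathbf{I}\mathbb{N}_{\infty}$ furnished by Lemma~\ref{lemma-1}. By that lemma each $\alpha\in\mathbf{I}\mathbb{N}_{\infty}$ acts on its co-finite domain as a single partial shift, i.e. there is an integer $k_{\alpha}$ with $(n)\alpha=n+k_{\alpha}$ for every $n\in\operatorname{dom}\alpha$. Thus the whole statement is, morally, the assertion that all three conditions say ``$k_{\alpha}=k_{\beta}$''. I would establish the proposition by proving $(i)\Rightarrow(ii)$, the equivalence $(ii)\Leftrightarrow(iii)$, and $(iii)\Rightarrow(i)$, using the shift description for the middle equivalence and the concrete form of the idempotents (by Proposition~\ref{proposition-3}$(i)$ these are exactly the identity maps $\varepsilon$ on co-finite subsets of $\mathbb{N}$) for the two implications involving $\mathfrak{C}_{\mathbf{mg}}$.

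For $(ii)\Leftrightarrow(iii)$ I would argue as follows. Since $\operatorname{dom}\alpha$ and $\operatorname{dom}\beta$ are co-finite, so is $\operatorname{dom}\alpha\cap\operatorname{dom}\beta$, which is therefore infinite and in particular non-empty; hence there is always a common point to test, and $(iii)\Rightarrow(ii)$ is immediate. Conversely, if $(i)\alpha=(i)\beta$ for one $i\in\operatorname{dom}\alpha\cap\operatorname{dom}\beta$, then $i+k_{\alpha}=i+k_{\beta}$, so $k_{\alpha}=k_{\beta}$; feeding this back into the shift formula gives $(j)\alpha=j+k_{\alpha}=j+k_{\beta}=(j)\beta$ for every $j\in\operatorname{dom}\alpha\cap\operatorname{dom}\beta$, which is exactly $(iii)$.

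The remaining implications unfold from the definition $\alpha\mathfrak{C}_{\mathbf{mg}}\beta\iff\varepsilon\alpha=\varepsilon\beta$ for some idempotent $\varepsilon$, together with the multiplication rule in $\mathscr{I}_{\lambda}$. For an idempotent $\varepsilon$ (the identity on a co-finite set $\operatorname{dom}\varepsilon$) and any $\gamma\in\mathbf{I}\mathbb{N}_{\infty}$ one has $\operatorname{dom}(\varepsilon\gamma)=\{y\in\operatorname{dom}\varepsilon:(y)\varepsilon\in\operatorname{dom}\gamma\}=\operatorname{dom}\varepsilon\cap\operatorname{dom}\gamma$ and $(y)(\varepsilon\gamma)=(y)\gamma$ there; that is, $\varepsilon\gamma$ is just $\gamma$ restricted to $\operatorname{dom}\varepsilon\cap\operatorname{dom}\gamma$. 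For $(i)\Rightarrow(ii)$ I would take the witnessing $\varepsilon$ with $\varepsilon\alpha=\varepsilon\beta$; then $\operatorname{dom}\varepsilon\cap\operatorname{dom}\alpha=\operatorname{dom}\varepsilon\cap\operatorname{dom}\beta$ is co-finite, hence non-empty, and any $i$ in it satisfies $i\in\operatorname{dom}\alpha\cap\operatorname{dom}\beta$ and $(i)\alpha=(i)(\varepsilon\alpha)=(i)(\varepsilon\beta)=(i)\beta$, which is $(ii)$. For $(iii)\Rightarrow(i)$ I would exhibit the idempotent explicitly: let $\varepsilon$ be the identity map on $\operatorname{dom}\alpha\cap\operatorname{dom}\beta$, which is co-finite and so lies in $E(\mathbf{I}\mathbb{N}_{\infty})$. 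Then $\varepsilon\alpha$ and $\varepsilon\beta$ both have domain $\operatorname{dom}\alpha\cap\operatorname{dom}\beta$, and by $(iii)$ they agree pointwise there, so $\varepsilon\alpha=\varepsilon\beta$ and $\alpha\mathfrak{C}_{\mathbf{mg}}\beta$.

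I do not anticipate a serious obstacle: once Lemma~\ref{lemma-1} pins each element down to a single shift and Proposition~\ref{proposition-3}$(i)$ identifies the idempotents, the argument is bookkeeping. The one point demanding care is the computation of $\operatorname{dom}(\varepsilon\gamma)$ from the composition rule in $\mathscr{I}_{\lambda}$---it is essential that $\varepsilon$ is the identity on its domain, so that $(y)\varepsilon=y$ and the selection condition $(y)\varepsilon\in\operatorname{dom}\gamma$ collapses to $y\in\operatorname{dom}\gamma$---together with the repeatedly used fact that an intersection of co-finite subsets of $\mathbb{N}$ is again co-finite, hence non-empty, which is precisely what makes ``some $i$'' and ``all $i$'' interchangeable in $(ii)$ and $(iii)$.
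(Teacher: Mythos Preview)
Your proposal is correct and follows essentially the same route as the paper: the paper also proves $(iii)\Rightarrow(ii)$ trivially, $(ii)\Rightarrow(iii)$ via the shift description from Lemma~\ref{lemma-1}, $(iii)\Rightarrow(i)$ by taking $\varepsilon$ to be the identity on $\operatorname{dom}\alpha\cap\operatorname{dom}\beta$, and $(i)\Rightarrow(ii)$ by picking any $i$ in $\operatorname{dom}\alpha\cap\operatorname{dom}\beta\cap\operatorname{dom}\varepsilon$. Your version is slightly more explicit (you spell out $\operatorname{dom}(\varepsilon\gamma)=\operatorname{dom}\varepsilon\cap\operatorname{dom}\gamma$ and note that co-finiteness guarantees the intersection is non-empty so that $(iii)\Rightarrow(ii)$ is not vacuous), but the argument is the same.
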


\begin{proof}
Імплікація $(iii)\Rightarrow(ii)$ очевидна. Оскільки за лемою~\ref{lemma-1} кожен елемент напівгрупи $\mathbf{I}\mathbb{N}_{\infty}$ є звуженням часткового зсуву множини натуральних чисел $\mathbb{N}$, то $(ii)\Rightarrow(iii)$.

$(iii)\Rightarrow(i)$
Якщо $i\alpha=i\beta$ для всіх $i\in\operatorname{dom}\alpha\cap\operatorname{dom}\beta$, то поклавши $\varepsilon\colon \operatorname{dom}\alpha\cap\operatorname{dom}\beta\to \operatorname{dom}\alpha\cap\operatorname{dom}\beta$~--- тотожне відображення, отримуємо, що $\varepsilon\in E(\mathbf{I}\mathbb{N}_{\infty})$ i $\varepsilon\alpha=\varepsilon\beta$.

$(i)\Rightarrow(ii)$ Припустимо, що $\varepsilon\alpha=\varepsilon\beta$ для деякого ідемпотента $\varepsilon$ напівгрупи $\mathbf{I}\mathbb{N}_{\infty}$. Оскільки $\varepsilon$~--- тотожне відображення коскінченної підмножини $\operatorname{dom}\varepsilon$ множини натуральних чисел, то з означення напівгрупи $\mathbf{I}\mathbb{N}_{\infty}$ випливає, що $\operatorname{dom}\alpha\cap\operatorname{dom}\beta\cap\operatorname{dom}\varepsilon\neq\varnothing$, а також, що
$(i)\alpha=(i)\varepsilon\alpha=(i)\varepsilon\beta=(i)\beta$
для всіх $i\in\operatorname{dom}\alpha\cap\operatorname{dom}\beta\cap\operatorname{dom}\varepsilon$.
\end{proof}

Надвлі в цій праці через $(\mathbb{Z},+)$ позначатимемо адитивну групу цілих чисел.

\begin{theorem}\label{theorem-8}
Фактор-напівгрупа $\mathbf{I}\mathbb{N}_{\infty}/\mathfrak{C}_{\mathbf{mg}}$ ізоморфна групі $(\mathbb{Z},+)$.
\end{theorem}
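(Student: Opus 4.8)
The plan is to realise the quotient explicitly through the ``shift amount'' of an element. By Lemma~\ref{lemma-1} every $\alpha\in\mathbf{I}\mathbb{N}_{\infty}$ is the restriction to its co-finite domain of a \emph{unique} shift $z\mapsto z+z_0$ of $\mathbb{Z}$ (uniqueness because $\operatorname{dom}\alpha$ is infinite), so the integer
\[
(\alpha)\mathfrak{h}=(i)\alpha-i
\]
is independent of the chosen $i\in\operatorname{dom}\alpha$. This yields a map $\mathfrak{h}\colon\mathbf{I}\mathbb{N}_{\infty}\to(\mathbb{Z},+)$, and the entire argument reduces to proving that $\mathfrak{h}$ is a surjective homomorphism whose induced congruence is precisely $\mathfrak{C}_{\mathbf{mg}}$; the isomorphism then follows from the homomorphism theorem for semigroups.

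First I would verify that $\mathfrak{h}$ is a homomorphism. As $\operatorname{dom}\alpha,\operatorname{dom}\beta,\operatorname{ran}\alpha,\operatorname{ran}\beta$ are all co-finite in $\mathbb{N}$, the set $\operatorname{dom}(\alpha\beta)$ is co-finite and in particular nonempty; choosing $i$ in it gives
\[
(i)(\alpha\beta)=\bigl((i)\alpha\bigr)\beta=\bigl(i+(\alpha)\mathfrak{h}\bigr)+(\beta)\mathfrak{h},
\]
so $(\alpha\beta)\mathfrak{h}=(\alpha)\mathfrak{h}+(\beta)\mathfrak{h}$. Surjectivity is direct: for $z_0\in\mathbb{Z}$ the shift $i\mapsto i+z_0$ restricted to $\{n\in\mathbb{N}\colon n\geqslant\max\{1,\,1-z_0\}\}$ lies in $\mathbf{I}\mathbb{N}_{\infty}$ and is sent to $z_0$, the cut-off $\max\{1,\,1-z_0\}$ guaranteeing that the range stays inside $\mathbb{N}$ for negative $z_0$.

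The key step is to match the congruence induced by $\mathfrak{h}$ with $\mathfrak{C}_{\mathbf{mg}}$, and here I would invoke Proposition~\ref{proposition-7}. If $(\alpha)\mathfrak{h}=(\beta)\mathfrak{h}$, then for any $i\in\operatorname{dom}\alpha\cap\operatorname{dom}\beta$ (nonempty, both domains being co-finite) we get $(i)\alpha=i+(\alpha)\mathfrak{h}=i+(\beta)\mathfrak{h}=(i)\beta$, so $\alpha\mathfrak{C}_{\mathbf{mg}}\beta$ by item $(ii)$ of that proposition. Conversely, if $\alpha\mathfrak{C}_{\mathbf{mg}}\beta$ then some $i\in\operatorname{dom}\alpha\cap\operatorname{dom}\beta$ satisfies $(i)\alpha=(i)\beta$, whence $(\alpha)\mathfrak{h}=(i)\alpha-i=(i)\beta-i=(\beta)\mathfrak{h}$. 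Thus the kernel congruence of $\mathfrak{h}$ is exactly $\mathfrak{C}_{\mathbf{mg}}$, and $\mathbf{I}\mathbb{N}_{\infty}/\mathfrak{C}_{\mathbf{mg}}\cong(\mathbb{Z},+)$.

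I expect no serious obstacle here: the statement is essentially a concrete reading of Proposition~\ref{proposition-7}. The two points demanding a little care are the well-definedness of $\mathfrak{h}$ — that is, extracting from Lemma~\ref{lemma-1} the fact that each $\alpha$ determines one and the same shift on all of $\operatorname{dom}\alpha$, so that $(i)\alpha-i$ is genuinely constant — and the observation that products in $\mathbf{I}\mathbb{N}_{\infty}$ always have co-finite (hence nonempty) domain, which is what lets $\mathfrak{h}$ respect multiplication at every pair. Everything beyond this is routine.
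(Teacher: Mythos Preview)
Your proof is correct and follows essentially the same route as the paper: define the shift map $\alpha\mapsto\mathbf{z}_{\alpha}$ (your $\mathfrak{h}$, the paper's $\mathfrak{F}$) via Lemma~\ref{lemma-1}, check it is a surjective homomorphism onto $(\mathbb{Z},+)$, and use Proposition~\ref{proposition-7} to identify its kernel congruence with $\mathfrak{C}_{\mathbf{mg}}$. Your version is slightly more explicit about surjectivity and about why $\operatorname{dom}(\alpha\beta)$ is nonempty, but the argument is the same.
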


\begin{proof}
Зафіксуємо довільні елементи $\alpha$ та $\beta$ напівгрупи $\mathbf{I}\mathbb{N}_{\infty}$ такі, що $\alpha\mathfrak{C}_{\mathbf{mg}}\beta$. Оскільки за лемою~\ref{lemma-1} кожен елемент напівгрупи $\mathbf{I}\mathbb{N}_{\infty}$ є звуженням часткового зсуву множини натуральних чисел $\mathbb{N}$, то існують цілі числа $\mathbf{z}_{\alpha}$ та $\mathbf{z}_{\beta}$ такі, що $(i)\alpha=i+\mathbf{z}_{\alpha}$ і $(j)\beta=j+\mathbf{z}_{\beta}$ для довільних $i\in\operatorname{dom}\alpha$ та $j\in\operatorname{dom}\beta$. З твердження~\ref{proposition-7} випливає, що $\mathbf{z}_{\alpha}=\mathbf{z}_{\beta}$. Очевидно, з твердження~\ref{proposition-7} випливає, що виконується також обернене тверд\-жен\-ня: \emph{якщо $\mathbf{z}_{\alpha}=\mathbf{z}_{\beta}$ для елементів $\alpha$ та $\beta$ напівгрупи $\mathbf{I}\mathbb{N}_{\infty}$, то $\alpha\mathfrak{C}_{\mathbf{mg}}\beta$}.

Означимо відображення $\mathfrak{F}\colon \mathbf{I}\mathbb{N}_{\infty}\to (\mathbb{Z},+)$ за формулою $(\alpha)\mathfrak{F}=\mathbf{z}_{\alpha}$. З леми~\ref{lemma-1} і твердження~\ref{proposition-7} випливає, що відображення $\mathfrak{F}\colon \mathbf{I}\mathbb{N}_{\infty}\to (\mathbb{Z},+)$ визначено коректно. Тоді для довільних елементів $\alpha$ та $\beta$ напівгрупи $\mathbf{I}\mathbb{N}_{\infty}$ маємо, що
$ (i)\alpha\beta=(i+\mathbf{z}_{\alpha})\beta=i+\mathbf{z}_{\alpha}+\mathbf{z}_{\beta}$, для всіх  $i\in\operatorname{dom}(\alpha\beta)$.
Таким чином, ми отримуємо, що $(\alpha\beta)\mathfrak{F}=\mathbf{z}_{\alpha}+\mathbf{z}_{\beta}=(\alpha)\mathfrak{F}+(\beta)\mathfrak{F}$, а отже $\mathfrak{F}\colon \mathbf{I}\mathbb{N}_{\infty}\to (\mathbb{Z},+)$~--- гомоморфізм. Очевидно, що так визначене відображення $\mathfrak{F}$ є сюр'єктивним.

Також, з твердження~\ref{proposition-7} випливає, що $(\alpha)\mathfrak{F}=(\beta)\mathfrak{F}$ тоді і лише тоді, коли $\alpha\mathfrak{C}_{\mathbf{mg}}\beta$. Таким чином, найменша групова конгруенція $\mathfrak{C}_{\mathbf{mg}}$ на інверсній напівгрупі $\mathbf{I}\mathbb{N}_{\infty}$ по\-роджує гомоморфізм $\mathfrak{F}\colon \mathbf{I}\mathbb{N}_{\infty}\to (\mathbb{Z},+)$ і конгруенція $\mathfrak{C}_{\mathbf{mg}}$ є ядром цього гомоморфізму, а отже виконується твердження теореми.
\end{proof}

Нагадаємо, що інверсна напівгрупа $S$ називається \emph{$F$-інверсною}, якщо $\mathfrak{C}_{\textsf{mg}}$-клас $s_{\mathfrak{C}_{\textsf{mg}}}$ кожного елемента $s$  має найбільший елемент стосовно природного часткового порядку $\preccurlyeq$ в $S$ [10].

\begin{theorem}\label{theorem-9}
$\mathbf{I}\mathbb{N}_{\infty}$~--- $F$-інверсна напівгрупа.
\end{theorem}

\begin{proof}
Зафіксуємо довільний елемент $\alpha$ напівгрупи $\mathbf{I}\mathbb{N}_{\infty}$. Означимо часткове відображення $\alpha_\mathbf{m}\colon\mathbb{N}\rightharpoonup\mathbb{N}$ наступним чином. Нехай $\mathbf{z}_{\alpha}$~--- ціле число, означене для елемента $\alpha$ в тексті доведення теореми~\ref{theorem-8}. Тоді можливі такі випадки:
\begin{equation*}
  (i)~\mathbf{z}_{\alpha}<0; \quad (ii)~\mathbf{z}_{\alpha}=0 \quad \hbox{або} \quad (iii)~\mathbf{z}_{\alpha}>0.
\end{equation*}
Покладемо:
\begin{itemize}
  \item[$(i)$] якщо $\mathbf{z}_{\alpha}<0$, то $$\operatorname{dom}\alpha_\mathbf{m}=\left\{i\in\mathbb{N}\colon i\geqslant-\mathbf{z}_{\alpha}+1\right\},$$ $\operatorname{ran}\alpha_\mathbf{m}=\mathbb{N}$ i $(j)\alpha_\mathbf{m}=j+\mathbf{z}_{\alpha}$ для всіх $j\in \operatorname{dom}\alpha_\mathbf{m}$;

  \item[$(ii)$] якщо $\mathbf{z}_{\alpha}=0$, то $\alpha_\mathbf{m}\colon\mathbb{N}\to\mathbb{N}$~--- тотожне відображення;

  \item[$(iii)$] якщо $\mathbf{z}_{\alpha}>0$, то $\operatorname{dom}\alpha_\mathbf{m}=\mathbb{N}$, $$\operatorname{ran}\alpha_\mathbf{m}=\left\{i\in\mathbb{N}\colon i\geqslant-\mathbf{z}_{\alpha}+1\right\}$$ i $(j)\alpha_\mathbf{m}=j+\mathbf{z}_{\alpha}$ для всіх $j\in \operatorname{dom}\alpha_\mathbf{m}$.
\end{itemize}
Очевидно, що так визначене часткове ві\-доб\-ра\-жен\-ня $\alpha_\mathbf{m}\colon\mathbb{N}\rightharpoonup\mathbb{N}$ є частковою ізометрією, а отже $\alpha_\mathbf{m}\in\mathbf{I}\mathbb{N}_{\infty}$. Тоді з твердження~\ref{proposition-7} випливає, що $\alpha\mathfrak{C}_{\mathbf{mg}}\alpha_\mathbf{m}$, а з твердження~\ref{proposition-5}, що виконується відношення $\alpha\preccurlyeq\alpha_\mathbf{m}$.

Зауважимо, що за виконання умов:
\begin{itemize}
  \item[$(i)$] якщо $\mathbf{z}_{\alpha}<0$, то $\operatorname{ran}\alpha_\mathbf{m}=\mathbb{N}$;

  \item[$(ii)$] якщо $\mathbf{z}_{\alpha}=0$, то $\operatorname{dom}\alpha_\mathbf{m}=\operatorname{ran}\alpha_\mathbf{m}=\mathbb{N}$;

  \item[$(iii)$] якщо $\mathbf{z}_{\alpha}>0$, то $\operatorname{dom}\alpha_\mathbf{m}=\mathbb{N}$,
\end{itemize}
з того, що $\alpha_\mathbf{m}\preccurlyeq\beta$ для деяких $\alpha,\beta\in\mathbf{I}\mathbb{N}_{\infty}$, з твердження~\ref{proposition-5} випливає рівність $\alpha_\mathbf{m}=\beta$. Таким чином, $\alpha_\mathbf{m}$~--- найбільший елемент $\mathfrak{C}_{\textsf{mg}}$-класу елемента $\alpha$ стосовно природного част\-ко\-во\-го порядку $\preccurlyeq$ на напівгрупі $\mathbf{I}\mathbb{N}_{\infty}$.
\end{proof}

Нагадаємо (див. наприклад [7, \S1.12]), що \emph{біциклічною напівгрупою} (або \emph{біциклічним моноїдом}) ${\mathscr{C}}(p,q)$ називається напівгрупа з одиницею, породжена двоелементною множиною $\{p,q\}$ і визначена одним визначальним співвідношенням $pq=1$. Біциклічна напівгрупа відіграє важливу роль в теорії
напівгруп. Так, зокрема, класична теорема Олафа Андерсена [4] стверджує, що ($0$-)проста напівгрупа з ідемпотентом є цілком ($0$-)простою тоді і лише тоді, коли вона не містить ізоморфну копію біциклічної напівгрупи. Стабільні напівгрупи не містять ізоморфної копії біциклічного моноїда [5].

\begin{remark}\label{remark-10}
1. Добре відомо (див. [7, \S1.12]), що біциклічний моноїд ${\mathscr{C}}(p,q)$ ізо\-морф\-ний напівгрупі $\mathscr{C}_{\mathbb{N}}$, по\-родже\-ній частковими перетвореннями $\alpha$ та $\beta$ множини натуральних чисел $\mathbb{N}$, які визначабться на\-ступ\-ним чином:
\begin{equation*}
\operatorname{dom}\alpha=\mathbb{N}, \quad \operatorname{ran}\alpha=\mathbb{N}\setminus\{1\}, \quad (n)\alpha=n+1
\end{equation*}
i
\begin{equation*}
\operatorname{dom}\beta=\mathbb{N}\setminus\{1\}, \quad \operatorname{ran}\beta=\mathbb{N}, \quad (n)\beta=n-1.
\end{equation*}
Оскільки композиція $\alpha\beta$~--- тотожне відображення множини натуральних чисел, то з результатів про бі\-цик\-ліч\-ний моноїд, отриманих у [7, \S1.12] випливає, що кожен елемент напівгрупи $\mathscr{C}_{\mathbb{N}}$ однозначно зображається у вигляді $\beta^i\alpha^j$, де $i$ та $j$~--- деякі невід'ємні цілі числа, $\beta^0=\alpha^0$~--- тотожне перетворення множини натуральних чисел, а також ізоморфізм $\mathfrak{I}\colon \mathscr{C}_{\mathbb{N}}\to {\mathscr{C}}(p,q)$ визначається за формулою $(\beta^i\alpha^j)\mathfrak{I}=q^ip^j$. Таким чином, напівгрупа $\mathbf{I}\mathbb{N}_{\infty}$ містить ізоморфну копію бі\-цикліч\-ної напівгрупи.

2. Легко бачити, що для довільного елемента $\beta^i\alpha^j$ напівгрупи $\mathscr{C}_{\mathbb{N}}$, де $i$ та $j$~--- деякі невід'ємні цілі числа, виконуються такі умови:
\begin{itemize}
  \item[(1)] $\operatorname{dom}(\beta^i\alpha^j)=\mathbb{N}\setminus\{1,\ldots,i\}$,
  \item[(2)] $\operatorname{ran}(\beta^i\alpha^j)=\mathbb{N}\setminus\{1,\ldots,j\}$,
  \item[(3)] $(n)\beta^i\alpha^j=n-j+i$,  для $n\in\operatorname{dom}(\beta^i\alpha^j)$.
\end{itemize}
Також очевидно, що кожен частковий зсув $\mu\colon \mathbb{N}\rightharpoonup \mathbb{N}$, $n\mapsto n+k$ нескінченного променя $\left\{l,l+1,l+2,\ldots\right\}$ множини натуральних чисел $\mathbb{N}$ збігається з частковим перетворенням $\beta^{l-1}\alpha^{k+l-1}$, яке є елементом напівгрупи $\mathscr{C}_{\mathbb{N}}$.
\end{remark}

З нижче викладеного прикладу випливає, що на напівгрупі $\mathbf{I}\mathbb{N}_{\infty}$ існують конгруенції, які не є груповими.

\begin{example}\label{example-11}
Означимо відображення $\mathfrak{H}\colon\mathbf{I}\mathbb{N}_{\infty}\rightarrow\mathscr{C}_{\mathbb{N}}$ наступним чином. Нехай $\eta$~--- довільний елемент напівгрупи $\mathbf{I}\mathbb{N}_{\infty}$. Оскільки за лемою~\ref{lemma-1} кожен елемент напівгрупи $\mathbf{I}\mathbb{N}_{\infty}$ є звуженням часткового зсуву множини натуральних чисел, то існує найменше натуральне число $n_{\eta}^{\mathbf{d}}\in\operatorname{dom}\eta$ таке, що $n\in\operatorname{dom}\eta$ для всіх натуральних $n\geqslant n_{\eta}^{\mathbf{d}}$ та існує ціле число $\mathbf{z}_{\eta}$ таке, що $(i)\eta=i+\mathbf{z}_{\eta}$ для довільних $i\in\operatorname{dom}\eta$. Покладемо $(\eta)\mathfrak{H}=\overline{\eta}$~--- звуження часткового перетворення $\eta$ множини натуральних чисел на множину $\left\{i\in\mathbb{N}\colon i\geqslant n_{\eta}^{\mathbf{d}}\right\}$. Тоді із зауваження~\ref{remark-10}(2) випливає, що часткове перетворення $\overline{\eta}$ збігається з частковим перетворенням $\beta^{n_{\eta}^{\mathbf{d}}-1}\alpha^{\mathbf{z}_{\eta}+n_{\eta}^{\mathbf{d}}-1}$, яке є елементом напівгрупи $\mathscr{C}_{\mathbb{N}}$.
\end{example}

\begin{proposition}\label{proposition-12}
Відображення $\mathfrak{H}\colon\mathbf{I}\mathbb{N}_{\infty}\rightarrow\mathscr{C}_{\mathbb{N}}$ є сюр'єктивним гомоморфізмом моноїдів.
\end{proposition}

\begin{proof}
Спочатку зауважимо, що з означення відображення $\mathfrak{H}\colon\mathbf{I}\mathbb{N}_{\infty}\rightarrow\mathscr{C}_{\mathbb{N}}$ випливає, що $(\beta^i\alpha^j)\mathfrak{H}=\beta^i\alpha^j$ для довільного елемента $\beta^i\alpha^j$ напівгрупи $\mathscr{C}_{\mathbb{N}}$.

Нехай $\eta$ та $\mu$~--- довільні елементи напівгрупи $\mathbf{I}\mathbb{N}_{\infty}$. Оскільки за лемою~\ref{lemma-1} кожен елемент напівгрупи $\mathbf{I}\mathbb{N}_{\infty}$ є звуженням часткового зсуву множини натуральних чисел, то існують найменші натуральні числа $n_{\eta}^{\mathbf{d}}\in\operatorname{dom}\eta$ і $n_{\mu}^{\mathbf{d}}\in\operatorname{dom}\mu$ такі, що $n\in\operatorname{dom}\eta$ та $m\in\operatorname{dom}\mu$ для всіх натуральних $n\geqslant n_{\eta}^{\mathbf{d}}$ і $m\geqslant n_{\mu}^{\mathbf{d}}$, та існують цілі числа $\mathbf{z}_{\eta}$ і $\mathbf{z}_{\mu}$ такі, що $(i)\eta=i+\mathbf{z}_{\eta}$ і $(j)\mu=j+\mathbf{z}_{\mu}$ для довільних $i\in\operatorname{dom}\eta$ та $j\in\operatorname{dom}\mu$.

З означення відображення $\mathfrak{H}\colon\mathbf{I}\mathbb{N}_{\infty}\rightarrow\mathscr{C}_{\mathbb{N}}$, використавши зауваження~\ref{remark-10}(1), отримуємо:
\begin{equation*}
\begin{split}
    &(\eta)\mathfrak{H}(\mu)\mathfrak{H}
= \beta^{n_{\eta}^{\mathbf{d}}-1}\alpha^{\mathbf{z}_{\eta}+n_{\eta}^{\mathbf{d}}-1} \beta^{n_{\mu}^{\mathbf{d}}-1}\alpha^{\mathbf{z}_{\mu}+n_{\mu}^{\mathbf{d}}-1}=\\
& =
\left\{
  \begin{array}{ll}
    \beta^{n_{\eta}^{\mathbf{d}}-1}\alpha^{\mathbf{z}_{\eta}+\mathbf{z}_{\mu}+n_{\eta}^{\mathbf{d}}-1}, & \hbox{якщо~} \mathbf{z}_{\eta}+n_{\eta}^{\mathbf{d}}>n_{\mu}^{\mathbf{d}};\\
    \beta^{n_{\eta}^{\mathbf{d}}-1}\alpha^{\mathbf{z}_{\mu}+n_{\mu}^{\mathbf{d}}-1}, & \hbox{якщо~} \mathbf{z}_{\eta}+n_{\eta}^{\mathbf{d}}=n_{\mu}^{\mathbf{d}};\\
    \beta^{n_{\mu}^{\mathbf{d}}-\mathbf{z}_{\eta}-1}\alpha^{\mathbf{z}_{\mu}+n_{\mu}^{\mathbf{d}}-1}, & \hbox{якщо~} \mathbf{z}_{\eta}+n_{\eta}^{\mathbf{d}}<n_{\mu}^{\mathbf{d}}.
  \end{array}
\right.
\end{split}
\end{equation*}
Оскільки за лемою~\ref{lemma-1} кожен елемент напівгрупи $\mathbf{I}\mathbb{N}_{\infty}$ є звуженням часткового зсуву множини натуральних чисел, то $\mathbf{z}_{\eta\mu}=\mathbf{z}_{\eta}+\mathbf{z}_{\mu}$. Далі визначимо найменше натуральне число $n_{\eta\mu}^{\mathbf{d}}\in\operatorname{dom}\eta$ таке, що $n\in\operatorname{dom}(\eta\mu)$ для всіх натуральних $n\geqslant n_{\eta\mu}^{\mathbf{d}}$. Знову, оскільки за лемою~\ref{lemma-1} кожен елемент напівгрупи $\mathbf{I}\mathbb{N}_{\infty}$ є звуженням часткового зсуву множини натуральних чисел, то отримуємо, що
\begin{equation*}
  n_{\eta\mu}^{\mathbf{d}}=\max\left\{\mathbf{z}_{\eta}+n_{\eta}^{\mathbf{d}},n_{\mu}^{\mathbf{d}}\right\}-\mathbf{z}_{\eta}.
\end{equation*}
Тоді
\begin{equation*}
\begin{split}
  &(\eta\mu)\mathfrak{H} =\beta^{n_{\eta\mu}^{\mathbf{d}}-1}\alpha^{\mathbf{z}_{\eta\mu}+n_{\eta\mu}^{\mathbf{d}}-1}= \\
    =&\beta^{\max\{\mathbf{z}_{\eta}{+}n_{\eta}^{\mathbf{d}},n_{\mu}^{\mathbf{d}}\}{-}\mathbf{z}_{\eta}{-}1} \alpha^{\mathbf{z}_{\eta}{+}\mathbf{z}_{\mu}{+}\!\max\{\mathbf{z}_{\eta}{+}n_{\eta}^{\mathbf{d}},n_{\mu}^{\mathbf{d}}\}{-}\mathbf{z}_{\eta}{-}1}\! \\
    =&\beta^{\max\left\{\mathbf{z}_{\eta}+n_{\eta}^{\mathbf{d}},n_{\mu}^{\mathbf{d}}\right\}-\mathbf{z}_{\eta}-1} \alpha^{\mathbf{z}_{\mu}+\max\left\{\mathbf{z}_{\eta}+n_{\eta}^{\mathbf{d}},n_{\mu}^{\mathbf{d}}\right\}-1}= \\
    =&
\left\{
  \begin{array}{ll}
    \beta^{n_{\eta}^{\mathbf{d}}-1}\alpha^{\mathbf{z}_{\eta}+\mathbf{z}_{\mu}+n_{\eta}^{\mathbf{d}}-1}, & \hbox{якщо~} \mathbf{z}_{\eta}+n_{\eta}^{\mathbf{d}}>n_{\mu}^{\mathbf{d}};\\
    \beta^{n_{\eta}^{\mathbf{d}}-1}\alpha^{\mathbf{z}_{\mu}+n_{\mu}^{\mathbf{d}}-1}, & \hbox{якщо~} \mathbf{z}_{\eta}+n_{\eta}^{\mathbf{d}}=n_{\mu}^{\mathbf{d}};\\
    \beta^{n_{\mu}^{\mathbf{d}}-\mathbf{z}_{\eta}-1}\alpha^{\mathbf{z}_{\mu}+n_{\mu}^{\mathbf{d}}-1}, & \hbox{якщо~} \mathbf{z}_{\eta}+n_{\eta}^{\mathbf{d}}<n_{\mu}^{\mathbf{d}}.
  \end{array}
\right.
\end{split}
\end{equation*}
Таким чином, виконується рівність $(\eta\mu)\mathfrak{H}=(\eta)\mathfrak{H}(\mu)\mathfrak{H}$ для всіх елементів $\eta$ і $\mu$ напівгрупи $\mathbf{I}\mathbb{N}_{\infty}$, а отже відображення $\mathfrak{H}\colon\mathbf{I}\mathbb{N}_{\infty}\rightarrow\mathscr{C}_{\mathbb{N}}$ є гомоморфізмом моноїдів.
\end{proof}

Очевидно, що ядро
\begin{equation*}
  \ker \mathfrak{H}=\left\{\left(\eta,\mu\right)\in\mathbf{I}\mathbb{N}_{\infty}\times \mathbf{I}\mathbb{N}_{\infty}\colon (\eta)\mathfrak{H}=(\mu)\mathfrak{H}\right\},
\end{equation*}
вище означеного гомоморфізму $\mathfrak{H}\colon\mathbf{I}\mathbb{N}_{\infty}\rightarrow\mathscr{C}_{\mathbb{N}}$ є конгруенцією на напівгрупі $\mathbf{I}\mathbb{N}_{\infty}$, яка не є груповою.

\smallskip

\emph{Гомоморфною ретракцією} називається відображення з напівгрупи $S$ в $S$, яке є одночасно ретракцією та гомоморфізмом [7]. Образ напівгрупи $S$ при її гомоморфній ретракції називається \emph{гомоморфним рет\-рак\-том}. Тобто гомоморфний ретракт напівгрупи $S$~--- це така піднапівгрупа $T$ в $S$, що існує гомоморфізм з $S$ в $S$, для якого піднапівгрупа $T$ є множиною всіх його нерухомих точок.

\smallskip

З твердження~\ref{proposition-12} випливає

\begin{corollary}\label{corollary-13}
Напівгрупа $\mathscr{C}_{\mathbb{N}}$  є гомоморф\-ним ретрактом напівгрупи $\mathbf{I}\mathbb{N}_{\infty}$.
\end{corollary}

\begin{remark}\label{remark-14}
Нехай $S$~--- напівгрупа, $T$~--- піднапівгрупа напівгрупи $S$ i $\mathfrak{C}_S$~--- конгруенція на $S$. Тоді з означення поняття конгруенція випливає, що звуження $\mathfrak{C}_T=\mathfrak{C}_S|_{T\times T}$ відношення $\mathfrak{C}_S$ на декартовий добуток $T\times T$ є конгруенцією на напівгрупі $T$
\end{remark}

Виявляється, що піднапівгрупа $\mathscr{C}_{\mathbb{N}}$ напівгрупи $\mathbf{I}\mathbb{N}_{\infty}$ дає можливість отримати критерій, коли конгруенція на $\mathbf{I}\mathbb{N}_{\infty}$ є груповою.

\begin{theorem}\label{theorem-15}
Конгруенція $\mathfrak{C}$ на напівгрупі $\mathbf{I}\mathbb{N}_{\infty}$ є груповою тоді і лише тоді, коли її звуження $\mathfrak{C}_{\mathscr{C}_{\mathbb{N}}}=\mathfrak{C}|_{\mathscr{C}_{\mathbb{N}}\times\mathscr{C}_{\mathbb{N}}}$ на піднапівгрупу $\mathscr{C}_{\mathbb{N}}$ не є тотожною конгруенцією на $\mathscr{C}_{\mathbb{N}}$.
\end{theorem}

\begin{proof}
$(\Rightarrow)$ Із зауваження~\ref{remark-14} випливає, якщо $\mathfrak{C}$~--- групова на напівгрупі $\mathbf{I}\mathbb{N}_{\infty}$, то її звуження $\mathfrak{C}_{\mathscr{C}_{\mathbb{N}}}$ на піднапівгрупу $\mathscr{C}_{\mathbb{N}}$ є також конгруенцією, а оскільки всі ідемпотенти напівгрупи $\mathbf{I}\mathbb{N}_{\infty}$ є $\mathfrak{C}$-еквівалентними, то всі ідемпотенти піднапівгрупи $\mathscr{C}_{\mathbb{N}}$ є також $\mathfrak{C}_{\mathscr{C}_{\mathbb{N}}}$-еквівалентними. За за\-ува\-жен\-ням~\ref{remark-10}(1) напівгрупа $\mathscr{C}_{\mathbb{N}}$ ізоморфна біциклічному моноїдові, то з наслідку~1.32 [7] випливає, що конгруенція $\mathfrak{C}_{\mathscr{C}_{\mathbb{N}}}$ на $\mathscr{C}_{\mathbb{N}}$ є також груповою, а отже $\mathfrak{C}_{\mathscr{C}_{\mathbb{N}}}$ не є відношенням рівності на $\mathscr{C}_{\mathbb{N}}$.

$(\Leftarrow)$ Припустимо, що конгруенція $\mathscr{C}$ на напівгрупі $\mathbf{I}\mathbb{N}_{\infty}$ є такою, що її звуження $\mathfrak{C}_{\mathscr{C}_{\mathbb{N}}}$ на піднапівгрупу $\mathscr{C}_{\mathbb{N}}$ не є відношенням рівності на $\mathscr{C}_{\mathbb{N}}$. Оскільки за за\-ува\-жен\-ням~\ref{remark-10}(1) напівгрупа $\mathscr{C}_{\mathbb{N}}$ ізоморфна біциклічному моноїдові, то з наслідку~1.32 [7] випливає, що конгруенція $\mathfrak{C}_{\mathscr{C}_{\mathbb{N}}}$ на $\mathscr{C}_{\mathbb{N}}$ є груповою, а отже всі ідемпотенти напівгрупи $\mathscr{C}_{\mathbb{N}}$ є $\mathfrak{C}_{\mathscr{C}_{\mathbb{N}}}$-еквівалентними.

Нехай $\varepsilon$~--- довільний ідемпотент напівгрупи $\mathbf{I}\mathbb{N}_{\infty}$. Оскільки $\varepsilon$ є тотожним ві\-доб\-ра\-жен\-ням коскінченної підмножини  $\operatorname{dom}\varepsilon$ множини натуральних чисел і $(\mathbb{N},\leqslant)$~--- цілком впорядкована множина, то існує найменше натуральне число $n_{\varepsilon}^{\mathbf{d}}\in\operatorname{dom}\varepsilon$ таке, що $n\in\operatorname{dom}\varepsilon$ для всіх натуральних $n\geqslant n_{\varepsilon}^{\mathbf{d}}$. Нехай $\overline{\varepsilon}$~--- тотожне відображення множини $\left\{n\in\mathbb{N}\colon n\geqslant n_{\varepsilon}^{\mathbf{d}}+1\right\}$. Тоді очевидно, що $\overline{\varepsilon}$~--- ідемпотент підмоноїда $\mathscr{C}_{\mathbb{N}}$ моноїда $\mathbf{I}\mathbb{N}_{\infty}$, а оскільки конгруенція $\mathfrak{C}_{\mathscr{C}_{\mathbb{N}}}$ на $\mathscr{C}_{\mathbb{N}}$ є груповою, то $\mathbb{I}\mathfrak{C}_{\mathscr{C}_{\mathbb{N}}}\overline{\varepsilon}$, а отже й $\mathbb{I}\mathfrak{C}\overline{\varepsilon}$. Також, легко бачити, що $\overline{\varepsilon}\preccurlyeq\varepsilon\preccurlyeq\mathbb{I}$, де $\preccurlyeq$~--- природний частковий порядок на напівгрупі $\mathbf{I}\mathbb{N}_{\infty}$, і тоді з відношення $\mathbb{I}\mathfrak{C}\overline{\varepsilon}$ випливає, що
$%\begin{equation*}
  \varepsilon=(\varepsilon\mathbb{I})\mathfrak{C}(\varepsilon\overline{\varepsilon})=\overline{\varepsilon},
$ %\end{equation*}
а отже $\varepsilon\mathfrak{C}\mathbb{I}$. З довільності вибору ідемпотента $\varepsilon$ в $\mathbf{I}\mathbb{N}_{\infty}$ випливає, що всі ідемпотенти напівгрупи $\mathbf{I}\mathbb{N}_{\infty}$ є $\mathscr{C}$-еквівалентними. Тоді з твердження~1.4.21(3) з монографії [9] випливає, що $\mathscr{C}$ --- групова конгруенція на напівгрупі $\mathbf{I}\mathbb{N}_{\infty}$.
\end{proof}

З теореми~\ref{theorem-15} випливає такий наслідок:

\begin{corollary}\label{corollary-16}
Для довільної конгруенції $\mathfrak{C}$ на моноїді $\mathbf{I}\mathbb{N}_{\infty}$ виконується лише одна з умов:
\begin{enumerate}
  \item[$(1)$] $\mathfrak{C}$ --- групова конгруенція на  $\mathbf{I}\mathbb{N}_{\infty}$;
  \item[$(2)$] звуження природного гомоморфізму $\mathfrak{C}^{\sharp}\colon \mathbf{I}\mathbb{N}_{\infty}\to \mathbf{I}\mathbb{N}_{\infty}/\mathfrak{C}$ на підмоноїд $\mathscr{C}_{\mathbb{N}}$ є тотожним відображенням.
\end{enumerate}
\end{corollary}

Також з теорем~\ref{theorem-8} i~\ref{theorem-15}  випливає наслідок~\ref{corollary-17}.

\begin{corollary}\label{corollary-17}
Нехай напівгрупа не міс\-тить ізоморфної копії біциклічного моноїда. Тоді для довільного гомоморфізму $\mathfrak{F}\colon\mathbf{I}\mathbb{N}_{\infty}\to S$ існує єдиний гомоморфізм $\mathfrak{H}\colon(\mathbb{Z},+)\to S$ такий, що наступна діаграма
\begin{equation*}
\xymatrix{
\mathbf{I}\mathbb{N}_{\infty}\ar[rr]^{\mathfrak{F}}\ar[dd]_{\mathfrak{C}_{\mathbf{mg}}^{\sharp}} && S\\
&&\\
(\mathbb{Z},+)\ar[rruu]_{\mathfrak{H}}
}
\end{equation*}
є комутативною.
\end{corollary}

\begin{lemma}\label{lemma-18}
Нехай $\mathfrak{C}$ ---  конгруенція на напівгрупі $\mathbf{I}\mathbb{N}_{\infty}$, яка відмінна від групової та тотожної. Тоді існують два різні $\mathfrak{C}$-еквівалентні ідемпотенти $\varepsilon$ i $\iota$ напівгрупи $\mathbf{I}\mathbb{N}_{\infty}$ такі, що $n_{\varepsilon}^{\mathbf{d}}=n_{\iota}^{\mathbf{d}}$.
\end{lemma}

\begin{proof}
Оскільки конгруенція $\mathfrak{C}$ на напівгрупі $\mathbf{I}\mathbb{N}_{\infty}$ відмінна від тотожної, то існують два різні $\mathfrak{C}$-еквівалентні елементи $\gamma,\delta\in \mathbf{I}\mathbb{N}_{\infty}$. За твердженням~\ref{proposition-3} кожен $\mathscr{H}$-клас напівгрупи $\mathbf{I}\mathbb{N}_{\infty}$ є одноелементним, то виконується хоча б одна з умов: $\gamma\gamma^{-1}\neq\delta\delta^{-1}$  або $\gamma^{-1}\gamma\neq\delta^{-1}\delta$. Отже, існують два різні $\mathfrak{C}$-еквівалентні ідемпотенти $\varepsilon,\iota\in \mathbf{I}\mathbb{N}_{\infty}$. За теоремою~\ref{theorem-15} елементи $\varepsilon$ i $\iota$ не можуть одно\-час\-но бути ідемпотентами підмоноїда $\mathscr{C}_{\mathbb{N}}$ моноїда $\mathbf{I}\mathbb{N}_{\infty}$, оскільки конгруенція $\mathfrak{C}$ на $\mathbf{I}\mathbb{N}_{\infty}$ не є груповою.

Припустимо, що $n_{\varepsilon}^{\mathbf{d}}>n_{\iota}^{\mathbf{d}}$. Нехай $\overline{\varepsilon}$~--- тотожне ві\-доб\-ра\-жен\-ня множини $\left\{k\in\mathbb{N}\colon k\geqslant n_{\iota}^{\mathbf{d}}-1\right\}$ i $\overline{\iota}$~--- тотожне ві\-доб\-ра\-жен\-ня множини $\left\{k\in\mathbb{N}\colon k\geqslant n_{\iota}^{\mathbf{d}}\right\}$. Тоді, очевидно, що $\overline{\varepsilon},\overline{\iota}\in E(\mathscr{C}_{\mathbb{N}})$ i  $\overline{\varepsilon}\neq\overline{\iota}$. Позаяк $\varepsilon\mathfrak{C}\iota$, то $\overline{\varepsilon}=(\varepsilon\overline{\varepsilon})\mathfrak{C}(\iota\overline{\varepsilon})=\overline{\iota}$, а отже два різні ідемпотенти підмоноїда $\mathscr{C}_{\mathbb{N}}$ моноїда $\mathbf{I}\mathbb{N}_{\infty}$ є $\mathfrak{C}$-еквівалентними. Тоді за теоремою~\ref{theorem-15} конгруенція $\mathfrak{C}$ є груповою, що суперечить припущенню. З отриманого протиріччя випливає, що нерівність $n_{\varepsilon}^{\mathbf{d}}>n_{\iota}^{\mathbf{d}}$ не виконується для двох різних $\mathfrak{C}$-еквівалентних ідемпотентів $\varepsilon$ i $\iota$ напівгрупи $\mathbf{I}\mathbb{N}_{\infty}$.

Аналогічно доводиться, що нерівність $n_{\varepsilon}^{\mathbf{d}}<n_{\iota}^{\mathbf{d}}$ не виконується для двох різних $\mathfrak{C}$-еквівалентних ідемпотентів $\varepsilon$ i $\iota$ напівгрупи $\mathbf{I}\mathbb{N}_{\infty}$.
\end{proof}

\begin{lemma}\label{lemma-19}
Для елемента $\xi$ моноїда $\mathbf{I}\mathbb{N}_{\infty}$ наступні умови є еквівалентними:
\begin{enumerate}
  \item[$(1)$] $\xi\in\mathscr{C}_{\mathbb{N}}$;
  \item[$(2)$] $n_{\xi}^{\mathbf{d}}=\min\{n\in \mathbb{N}\colon n\in\operatorname{dom}\xi\}$;
  \item[$(3)$] $n_{\xi}^{\mathbf{r}}=\min\{n\in \mathbb{N}\colon n\in\operatorname{ran}\xi\}$.
\end{enumerate}
\end{lemma}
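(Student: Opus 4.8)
The plan is to reduce all three conditions to a single geometric statement about \emph{tails} $\{m,m+1,m+2,\dots\}$ of $\mathbb{N}$. By Lemma~\ref{lemma-1} the element $\xi$ is the restriction of a shift $n\mapsto n+\mathbf{z}_{\xi}$ of $\mathbb{Z}$ to a co-finite subset $\operatorname{dom}\xi\subseteq\mathbb{N}$, so that $\operatorname{ran}\xi=\{\,n+\mathbf{z}_{\xi}\colon n\in\operatorname{dom}\xi\,\}$. First I would record the elementary observation that condition $(2)$ is equivalent to saying that $\operatorname{dom}\xi$ is a tail of $\mathbb{N}$: the inequality $\min\{n\colon n\in\operatorname{dom}\xi\}\leqslant n_{\xi}^{\mathbf{d}}$ always holds because $n_{\xi}^{\mathbf{d}}\in\operatorname{dom}\xi$, and equality forces every integer between the minimum and $n_{\xi}^{\mathbf{d}}$ to lie in $\operatorname{dom}\xi$, i.e.\ $\operatorname{dom}\xi$ has no gaps; conversely a tail plainly satisfies $n_{\xi}^{\mathbf{d}}=\min\operatorname{dom}\xi$. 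In exactly the same way $(3)$ is equivalent to $\operatorname{ran}\xi$ being a tail of $\mathbb{N}$.

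Next I would prove $(2)\Leftrightarrow(3)$. Because $\operatorname{ran}\xi=\{\,n+\mathbf{z}_{\xi}\colon n\in\operatorname{dom}\xi\,\}$, translation by $\mathbf{z}_{\xi}$ carries $\operatorname{dom}\xi$ order-isomorphically onto $\operatorname{ran}\xi$; it sends a tail of $\mathbb{N}$ to a set of consecutive integers unbounded above, which, being a subset of $\mathbb{N}$, is again a tail of $\mathbb{N}$. Hence $\operatorname{dom}\xi$ is a tail exactly when $\operatorname{ran}\xi$ is, which is precisely $(2)\Leftrightarrow(3)$. (Alternatively one may apply the observation of the first paragraph to $\xi^{-1}\in\mathbf{I}\mathbb{N}_{\infty}$, whose domain is $\operatorname{ran}\xi$.)

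It remains to link these with membership in $\mathscr{C}_{\mathbb{N}}$. For $(1)\Rightarrow(2)$ I would invoke Remark~\ref{remark-10}(2): every $\xi=\beta^{i}\alpha^{j}\in\mathscr{C}_{\mathbb{N}}$ has $\operatorname{dom}\xi=\{i+1,i+2,\dots\}$, a tail, so $(2)$ holds. For $(2)\Rightarrow(1)$ I would use the equivalence $(2)\Leftrightarrow(3)$ just established to conclude that both $\operatorname{dom}\xi$ and $\operatorname{ran}\xi$ are tails, say $\operatorname{dom}\xi=\{l,l+1,\dots\}$; then $\xi$ is the shift $n\mapsto n+\mathbf{z}_{\xi}$ restricted to this tail, and the closing sentence of Remark~\ref{remark-10}(2) identifies it with $\beta^{\,l-1}\alpha^{\,\mathbf{z}_{\xi}+l-1}\in\mathscr{C}_{\mathbb{N}}$. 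This closes the cycle and yields the equivalence of $(1)$, $(2)$, $(3)$.

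The only real care needed is the bookkeeping at the boundary: I must check that translating a tail by $\mathbf{z}_{\xi}$ (which may be negative) still lands inside $\mathbb{N}$ and that the $\alpha$-exponent $\mathbf{z}_{\xi}+l-1$ produced by Remark~\ref{remark-10} is non-negative. Both are forced by $\operatorname{ran}\xi\subseteq\mathbb{N}$, which gives $l+\mathbf{z}_{\xi}\geqslant 1$, so I expect no genuine obstacle beyond this verification.
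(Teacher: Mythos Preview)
Your proof is correct and follows essentially the same route as the paper's: both invoke Remark~\ref{remark-10} for the implications between $(1)$ and $(2)$, and Lemma~\ref{lemma-1} (the translation structure of $\xi$) for the equivalence $(2)\Leftrightarrow(3)$. The only cosmetic difference is that you first rephrase $(2)$ and $(3)$ as the assertion that $\operatorname{dom}\xi$ and $\operatorname{ran}\xi$ are tails of $\mathbb{N}$, while the paper works directly with the numerical equalities and writes the resulting element as $\beta^{n_{\xi}^{\mathbf{d}}-1}\alpha^{n_{\xi}^{\mathbf{r}}-1}$ rather than $\beta^{\,l-1}\alpha^{\,\mathbf{z}_{\xi}+l-1}$.
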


\begin{proof}
Імплікації $(1)\Rightarrow(2)$ та $(1)\Rightarrow(3)$ випливають з означення напівгрупи $\mathscr{C}_{\mathbb{N}}$ (див. за\-ува\-жен\-ня~\ref{remark-10}(1).

Імплікація $(2)\Rightarrow(1)$ випливає із за\-ува\-жен\-ня~\ref{remark-10}(2). Справді, за лемою~\ref{lemma-1} елемент $\xi$ є звуженням часткового зсуву множини натуральних чисел на коскінченну підмножину в $\mathbb{N}$, а отже маємо. що $(n)\xi=n-n_{\xi}^{\mathbf{d}}+n_{\xi}^{\mathbf{r}}$ для всіх $n\in\operatorname{dom}\xi$. Тоді із за\-ува\-жен\-ня~\ref{remark-10}(2) випливає, що $\xi=\beta^{n_{\xi}^{\mathbf{d}}-1}\alpha^{n_{\xi}^{\mathbf{r}}-1}$.

Еквівалентність умов (2) та (3) є наслідком леми~~\ref{lemma-1}, оскільки елемент $\xi$ є звуженням част\-кового зсуву множини натуральних чисел на коскінченну підмножину в $\mathbb{N}$.
\end{proof}

\begin{definition}\label{definition-20}
Нехай $\xi$~--- довільний елемент напівгрупи $\mathbf{I}\mathbb{N}_{\infty}$ такий, що $\xi\notin \mathscr{C}_{\mathbb{N}}$. Означимо
\begin{equation*}
\begin{split}
 & \underline{n}_{\xi}^{\mathbf{d}}=\min\left\{n\in \mathbb{N}\colon n\in\operatorname{dom}\xi\right\}, \\
 & \underline{n}_{\xi}^{\mathbf{r}}=\min\left\{n\in \mathbb{N}\colon n\in\operatorname{ran}\xi\right\},\\
 & \overline{n}_{\xi}^{\mathbf{d}}=\max\left\{n\in\operatorname{dom}\xi\colon n<n_{\xi}^{\mathbf{d}}\right\},\\
 & \overline{n}_{\xi}^{\mathbf{r}}=\max\left\{n\in\operatorname{ran}\xi\colon n<n_{\xi}^{\mathbf{r}}\right\}.
\end{split}
\end{equation*}
\end{definition}

Очевидно, що виконуються наступні умови:
$\underline{n}_{\xi}^{\mathbf{d}}\leqslant\overline{n}_{\xi}^{\mathbf{d}}<n_{\xi}^{\mathbf{d}}$, $\underline{n}_{\xi}^{\mathbf{r}}\leqslant\overline{n}_{\xi}^{\mathbf{r}}<n_{\xi}^{\mathbf{r}}$,
$(\underline{n}_{\xi}^{\mathbf{d}})\xi=\underline{n}_{\xi}^{\mathbf{r}}$ i
$(\overline{n}_{\xi}^{\mathbf{d}})\xi=\overline{n}_{\xi}^{\mathbf{r}}$,
для довільного $\xi\in\mathbf{I}\mathbb{N}_{\infty}\setminus\mathscr{C}_{\mathbb{N}}$. Також, елемент $\xi\in\mathbf{I}\mathbb{N}_{\infty}\setminus\mathscr{C}_{\mathbb{N}}$ є ідемпотентом тоді і лише тоді, коли $\underline{n}_{\xi}^{\mathbf{d}}=\underline{n}_{\xi}^{\mathbf{r}}$ i $\overline{n}_{\xi}^{\mathbf{d}}=\overline{n}_{\xi}^{\mathbf{r}}$.

\begin{lemma}\label{lemma-21}
Кожен ідемпотент напівгрупи $\mathbf{I}\mathbb{N}_{\infty}$ є одиницею піднапівгрупи в $\mathbf{I}\mathbb{N}_{\infty}$, яка ізоморфна біциклічній напівгрупі.
\end{lemma}

\begin{proof}
Нехай $\varepsilon$~--- довільний ідемпотент напівгрупи $\mathbf{I}\mathbb{N}_{\infty}$. Якщо $\varepsilon\in\mathscr{C}_{\mathbb{N}}$ і оскільки напівгрупа $\mathscr{C}_{\mathbb{N}}$ ізоморфна біциклічній напівгрупі, то не зменшуючи загальності можемо вважати, що ідемпотент $\varepsilon$ можна ототожнити з ідемпотентом $q^ip^i$ біциклічного моноїда $\mathscr{C}(p,q)$, для деякого невід'ємного цілого чис\-ла $i$.  Тоді
\begin{equation*}
  \begin{array}{l}
     q^ip^{i+1}\cdot q^{i+1}p^{i}=q^ip^{i},\\
     q^{i+1}p^{i}\cdot q^{i}p^{i+1}=q^{i+1}p^{i+1},\\
     q^ip^{i+1}\cdot q^{i}p^{i}=q^ip^{i+1},\\
     q^{i}p^{i}\cdot q^{i}p^{i+1}=q^{i}p^{i+1},\\
     q^{i+1}p^{i}\cdot q^{i}p^{i}=q^{i+1}p^{i},\\
     q^{i}p^{i}\cdot q^{i+1}p^{i}=q^{i+1}p^{i}
   \end{array}
\end{equation*}
i $q^{i+1}p^{i+1}\neq q^{i}p^{i}$, а отже за лемою~1.31 з [7] піднапівгрупа в $\mathbf{I}\mathbb{N}_{\infty}$, породжена елементами $q^ip^{i+1}$ i $q^{i+1}p^{i}$ ізоморфна біциклічній напівгрупі.

Припустимо, що $\varepsilon\notin\mathscr{C}_{\mathbb{N}}$. Тоді для ідемпотента $\varepsilon$ виконуються умови $\underline{n}_{\varepsilon}^{\mathbf{d}}\leqslant \overline{n}_{\varepsilon}^{\mathbf{d}}<{n}_{\varepsilon}^{\mathbf{d}}$. Означимо часткову бієкцію $\gamma\colon \mathbb{N}\rightharpoonup\mathbb{N}$ так:
$\operatorname{dom}\gamma=\operatorname{dom}\varepsilon$, $$\operatorname{ran}\gamma=\left\{i-\underline{n}_{\varepsilon}^{\mathbf{d}}+ \overline{n}_{\varepsilon}^{\mathbf{d}}\colon i\in\operatorname{dom}\varepsilon \right\}$$ i
$(n)\gamma=n-\underline{n}_{\varepsilon}^{\mathbf{d}}+ \overline{n}_{\varepsilon}^{\mathbf{d}}$,  для всіх $n\in \operatorname{dom}\gamma$.
Тоді, очевидно, що $\gamma\in\mathbf{I}\mathbb{N}_{\infty}$ і виконуються такі співвідношення:
$$
\varepsilon\gamma=\gamma\varepsilon=\gamma,
$$
$$
\gamma^{-1}\varepsilon=\gamma^{-1}\varepsilon^{-1}=(\varepsilon\gamma)^{-1}=\gamma^{-1},
$$
$$
\varepsilon\gamma^{-1}=\varepsilon^{-1}\gamma^{-1}=(\gamma\varepsilon)^{-1}=\gamma^{-1},
$$
$$
\gamma\gamma^{-1}=\varepsilon \qquad  \hbox{i}\qquad \gamma^{-1}\gamma\neq\varepsilon,
$$
а отже за лемою~1.31 з [7] піднапівгрупа в $\mathbf{I}\mathbb{N}_{\infty}$, породжена елементами $\gamma$ i $\gamma^{-1}$ ізо\-морф\-на біциклічній напівгрупі.
\end{proof}

%\begin{lemma}\label{lemma-22}
%Для довільного неодиничного елемента $x$ біциклічного моноїда $\mathscr{C}(p,q)$ з одиницею $1$ такого, що $x\mathscr{L}1$, піднапівгрупа в $\mathscr{C}(p,q)$, породжена елементами $x$ і $x^{-1}$, ізоморфна біциклічному моноїдові.
%\end{lemma}

%\begin{proof}
%Позаяк $x\mathscr{L}1$ і всі $\mathscr{H}$-класи в $\mathscr{C}(p,q)$ є тривіальними, то $xx^{-1}=1$ i $x^{-1}x\neq 1$. Також, з того, що $1$~--- одиниця напівгрупи $\mathscr{C}(p,q)$ маємо $x^{-1}1=1x^{-1}=x^{-1}$ i $x1=1x=x$, а тоді твердження леми випливає з леми~1.31 \cite{Clifford-Preston-1961-1967}.
%\end{proof}

\begin{theorem}\label{theorem-23}
Для конгруенції $\mathfrak{C}$ на напівгрупі $\mathbf{I}\mathbb{N}_{\infty}$ наступні умови є еквівалент\-ними:
\begin{enumerate}
  \item[$(1)$] $\mathfrak{C}$~--- групова конгруенція на $\mathbf{I}\mathbb{N}_{\infty}$;

  \item[$(3)$] існує піднапівгрупа $S$ в $\mathbf{I}\mathbb{N}_{\infty}$, яка ізоморфна біциклічній напівгрупі та два різні елементи напівгрупи $S$ є $\mathfrak{C}$-ек\-вівалентними;

  \item[$(3)$] для довільної піднапівгрупи $T$ в $\mathbf{I}\mathbb{N}_{\infty}$, яка ізоморфна біциклічній напівгрупі, два різні елементи напівгрупи $T$ є $\mathfrak{C}$-ек\-вівалентними.
\end{enumerate}
\end{theorem}

\begin{proof}
Імплікація $(1)\Rightarrow(2)$ випливає з теореми~\ref{theorem-15}, а імплікації $(1)\Rightarrow(3)$ і $(3)\Rightarrow(2)$ є очевидними.

Доведемо, що виконується імплікація $(2)\Rightarrow(1)$. Припустимо, що $S$~--- піднапівгрупа в $\mathbf{I}\mathbb{N}_{\infty}$, яка ізоморфна біциклічній напівгрупі та два різні елементи  напівгрупи $S$ є $\mathfrak{C}$-еквівалентними. Тоді за наслідком~1.32 з [7] усі ідемпотенти напівгрупи $S$ є $\mathfrak{C}$-еквівалентними.

Нехай $\varepsilon$~--- одиниця напівгрупи $S$. З теореми~\ref{theorem-15} випливає, що не зменшуючи загальності, можемо вважати, що $\varepsilon\notin\mathscr{C}_{\mathbb{N}}$. Справді, припустивши, що $\varepsilon\in\mathscr{C}_{\mathbb{N}}$, то, оскільки за лемою~\ref{lemma-1} кожен елемент напівгрупи $\mathbf{I}\mathbb{N}_{\infty}$ є звуженням часткового зсуву множини натуральних чисел, існує елемент $\gamma\in S\cap \mathscr{C}_{\mathbb{N}}$ такий, що $\operatorname{dom}\varepsilon=\operatorname{dom}\gamma$ i $\gamma\neq \varepsilon$. Тоді $\gamma\gamma^{-1}=\varepsilon$, $\gamma^{-1}\gamma\neq\varepsilon$ i $\gamma,\gamma^{-1},\gamma^{-1}\gamma\in\mathscr{C}_{\mathbb{N}}\cap S$. А отже, два ідемпотента напівгрупи $\mathscr{C}_{\mathbb{N}}$ є $\mathfrak{C}$-еквівалентними. Тоді за наслідком~1.32 з [7] i теоремою~\ref{theorem-15}, $\mathfrak{C}$~--- групова конгруенція на напівгрупі $\mathbf{I}\mathbb{N}_{\infty}$.

Зафіксуємо довільний елемент $\gamma$ напівгрупи $S$ такий, що $\gamma\mathscr{L}\varepsilon$. Для довільного натурального числа $i$ покладемо $$\varepsilon_i=\underbrace{\gamma^{-1}\ldots\gamma^{-1}}_{{\footnotesize\hbox{$i$-разів}}}\underbrace{\gamma\ldots\gamma}_{{\footnotesize\hbox{$i$-разів}}}.$$ Тоді, оче\-вид\-но, що $\varepsilon_i$~--- ідемпотент напівгрупи $S$ i $$\underbrace{\gamma\ldots\gamma}_{{\footnotesize\hbox{$i$-разів}}}\underbrace{\gamma^{-1}\ldots\gamma^{-1}}_{{\footnotesize\hbox{$i$-разів}}}=\varepsilon,$$ бо $\gamma\gamma^{-1}=\varepsilon$. Також, оскільки всі $\mathscr{H}$-класи в біциклічному моноїді є тривіальними та його напівгрупа ідемпотентів є $\omega$-ланцюгом, то
$$
\operatorname{dom}\varepsilon_{i+1}\subsetneqq \operatorname{dom}\varepsilon_{i}\subsetneqq \operatorname{dom}\varepsilon
$$
для довільного натурального числа $i$. З $\varepsilon\notin\mathscr{C}_{\mathbb{N}}$ випливає, що
\begin{equation*}
  \underline{n}_{\varepsilon}^{\mathbf{d}}=\underline{n}_{\varepsilon}^{\mathbf{r}}\leqslant\overline{n}_{\varepsilon}^{\mathbf{d}}= \overline{n}_{\gamma^i}^{\mathbf{d}}< {n}_{\varepsilon}^{\mathbf{d}}={n}_{\gamma^i}^{\mathbf{d}},
\end{equation*}
\begin{equation*}
\underline{n}_{\varepsilon}^{\mathbf{d}}=\underline{n}_{\varepsilon}^{\mathbf{r}}<\underline{n}_{\gamma^i}^{\mathbf{r}}\leqslant \overline{n}_{\gamma^i}^{\mathbf{r}}<{n}_{\gamma^i}^{\mathbf{r}},
\end{equation*}
\begin{equation*}
  \underline{n}_{\gamma^i}^{\mathbf{r}}<\underline{n}_{\gamma^{i+k}}^{\mathbf{r}}, \quad \overline{n}_{\gamma^i}^{\mathbf{r}}<\overline{n}_{\gamma^{i+k}}^{\mathbf{r}} \quad \hbox{i} \quad  {n}_{\gamma^i}^{\mathbf{r}}<{n}_{\gamma^{i+k}}^{\mathbf{r}},
\end{equation*}
для довільних натуральних чисел $i$ та $k$.

Тоді існує таке натуральне число $j$, що ${n}_{\gamma}^{\mathbf{r}}\leqslant\underline{n}_{\gamma^j}^{\mathbf{r}}$. Означимо: $\varphi_0,\varphi_1$ i $\varphi_2$~--- то\-тож\-ні відображення множин
$\left\{n\in\mathbb{N}\colon n\geqslant\underline{n}_{\varepsilon}^{\mathbf{d}}\right\}$,
$\left\{n\in\mathbb{N}\colon n\geqslant{n}_{\varepsilon}^{\mathbf{d}}\right\}$ i $\big\{n\in\mathbb{N}\colon n\geqslant\underline{n}_{\gamma^j}^{\mathbf{d}}\big\}$,
відповідно. Тоді $\varphi_0,\varphi_1$ i $\varphi_2$~--- різні ідемпотенти напівгрупи $\mathscr{C}_{\mathbb{N}}$, і оскільки
$$
\varepsilon_j\preccurlyeq \varphi_2\preccurlyeq \varphi_1\preccurlyeq \varepsilon\preccurlyeq \varphi_0,
$$
то з умови $\varepsilon_j\mathfrak{C}\varepsilon$
випливає, що $\varphi_2\mathfrak{C}\varphi_1$. Оскільки напівгрупа $\mathscr{C}_{\mathbb{N}}$ ізоморфна біциклічному моноїдові, то за наслідком~1.32 з [7] та теоремою~\ref{theorem-15}, $\mathfrak{C}$~--- групова конгруенція на напівгрупі $\mathbf{I}\mathbb{N}_{\infty}$.
\end{proof}
\bigskip

{\normalsize\centerline{СПИСОК ЛІТЕРАТУРИ}
\medskip

1. \emph{Безущак О.О.}
Від\-ношення Ґріна інверсної на\-півгрупи частково визначених коскінченних ізометрій дис\-крет\-ної лінійки //
Вісник Київ. ун-ту. Сер. фіз.-мат.~-- 2008.~--  N1.~-- С.~12--16.

2. \emph{Вагнер В.В.}
Обобщённые группы //
ДАН \break {С}{С}{С}{Р} -- 1952.~-- {\bf 84}.~-- С.~1119--1122.

3. \emph{Гутік О., Савчук А.}
Про напівгрупу $\mathbf{ID}_{\infty}$ //
Вісник Львів. ун-ту. Сер. мех.-мат.~-- 2017.~-- \textbf{83}.~-- С.~5--19.

4. \emph{Andersen O.}
Ein Bericht uber die Struktur abstrakter Halbgruppen. ---
PhD Thesis, Hamburg, 1952.

5. \emph{Anderson L.W., Hunter R.P., Koch R.J.}
Some results on stability in semigroups //
Trans. Amer. Math. Soc.~-- 1965.~-- \textbf{117}.~-- P.~521--529.

6. \emph{Bezushchak O.}
On growth of the inverse semigroup of partially defined co–finite automorphisms of integers //
Algebra Discrete Math.~-- 2004.~--  N2.~-- P.~45--55.

7. \emph{Clifford A. H., Preston G. B.}
The algebraic theory of semigroups. -- Providence: Amer. Math. Soc., 1961. -- Vol. 1. --- xv+224 p.; 1972. -- Vol. 2. -- xv+352~p.

8. \emph{Gutik O., Repov\v{s} D.}
Topological monoids of monotone, injective partial selfmaps of $\mathbb{N}$ having cofinite domain and image //
Stud. Sci. Math. Hungar.~-- 2011.~-- {\bf 48}, N3.~-- P.~342--353.

9. \emph{Lawson M.} Inverse semigroups. The theory of partial symmetries. -- Singapore: World Sci., 1998. -- xiii+411 p.

10. \emph{McFadden R., O’Carroll L.}
$F$-inverse semigroups //
Proc. Lond. Math. Soc., III. Ser. ~-- 1971.~-- {\bf 22}.~-- P.~652--666.

11. \emph{Petrich M.} Inverse semigroups. -- New York: John Wiley \& Sons, 1984. -- 674 p.
}

\end{document}